%

\documentclass[12pt]{article}
\usepackage{amsmath,amsthm}
\usepackage{amsfonts}
\usepackage{amssymb}
\usepackage{enumerate}
\usepackage{hyperref}
\usepackage{comment}
\sloppy

\newtheorem{theorem}{Theorem}[section]
\newtheorem{definition}[theorem]{Definition}
\newtheorem{lemma}[theorem]{Lemma}

\newtheorem{cor}[theorem]{Corollary}
\newtheorem{prop}[theorem]{Proposition}

\newtheorem{example}[theorem]{Example}

\def\eq{\begin{equation}}
\def\equ{\end{equation}}


\def\h1h{H^1(H)}
\def\h1l2{H^1(\ell^2)}

\def\h1x{H^1(X, d, \mu )}

\font\textmsbm= msbm10 scaled 1200
\font\scriptmsbm= msbm7 scaled 1200
\font\scriptscriptmsbm= msbm5 scaled 1200

\def\Id{\mathchoice{\mbox{Id}}{\mbox{Id}}{\mbox{\scriptsize
Id}}{\mbox{\tiny Id}}}

\def\endproof{\par\nobreak\hbox to \hsize{\hfil\vrule width 5pt height
5pt}\goodbreak\vskip 3pt}
\def\endproofof{\par\nobreak\hbox to \hsize{\hfil\vrule width 5pt height
5pt}\goodbreak\vskip 3pt}

\newfam\tmwfama
\textfont\tmwfama\textmsbm
\scriptfont\tmwfama\scriptmsbm
\scriptscriptfont\tmwfama\scriptscriptmsbm
\def\bbb{\mathbb}

\def\bR{{\bbb R}}
\def\bC{{\bbb C}}

\def\bE{{\bbb E}}

\def\bK{{\bbb K}}

\def\s{\sigma}


      \def\sbe{\subseteq}

\def\cA{{\cal A}}

\def\cC{{\cal C}}
\def\cD{{\cal D}}
\def\cE{{\cal E}}

\def\cK{{\cal K}}
\def\cL{{\cal L}}

\def\Id{\mbox{ \rm Id} }

\newcommand{\sptext}[3]{\hspace{#1 em}\mbox{#2}\hspace{#3 em}}

\newcommand{\umd}{{\tt UMD}}
\newcommand{\ftn}{\mathcal{F}}
\newcommand{\ftnd}{\mathcal{F}^{}}
\newcommand{\noo}{\left \|}
\newcommand{\rrm}{\right \|}
\newcommand{\equa}{\begin{eqnarray*}}

\newcommand{\tion}{\end{eqnarray*}}

\newcommand{\prob}{\mathbb{P}}
\newcommand{\ev}{\mathbb{E}}

\def\y{\lbrack\!\lbrack} 
\def\yy{\rbrack\!\rbrack}
\def\s{\sigma}

\def\sbe{\subseteq}
\def\bbb{\mathbb}
\def\bE{{\bbb E}}

\allowdisplaybreaks

\begin{document}

\title{Extrapolation of vector valued rearrangement operators}
\author{Stefan Geiss 
        \thanks{Supported by the project
         {\em Stochastic and Harmonic Analysis, Interactions and Applications}
         of the Academy of Finland.}
        \and
        Paul F.X. M\"uller} 
\maketitle
\begin{abstract} 
Given an injective map $\tau:\cD\to\cD$ between the dyadic intervals of the
unit interval $[0,1)$, we study extrapolation properties of the induced
rearrangement operator of the Haar system 
$\Id _X \otimes T_{p,\tau}: L_{X,0}^p([0,1)) \to L_X^p([0,1))$, where 
$X$ is a Banach space and $L_{X,0}^p$ the subspace of mean zero 
random variables. If $X$ is a $\umd$-space, then we prove that the property 
that $\Id _X \otimes T_{p,\tau}$ is an isomorphism for some 
$1 <p  \ne 2 <\infty$ 
extrapolates across the entire scale of  $L_{X}^q$-spaces with $1 <q<\infty.$
In contrast, if  only $\Id _X \otimes T_{p,\tau}$ is bounded and not its 
inverse, then we show that there can only exist one-sided extrapolation 
theorems. 
\end{abstract}


2000 Mathematics Subject Classification:
46B07, 
46B70, 
47B37  



\section{Introduction} 

In vector valued $L^p$-spaces we study rearrangement operators of the 
system
\[ \{ h_I/ |I|^{1/p} : I \in \cD \} , \]
where $\cD $ denotes the collection of all dyadic intervals included in 
$ [0,1)$ and $h_I$ is the $L_\infty$-normalized Haar function with
support $I$. 
These rearrangement operators are defined  by an injective map 
$\tau : \cD \to \cD $ as extension of
\[ \Id _{X} \otimes T_{p,\tau} : \sum_{ I \in \cD}  a_I  h_I/ |I|^{1/p} \to 
   \sum_{ I \in \cD } a_I  h_{\tau(I)}/ |\tau(I)|^{1/p}, \]
where $(a_I)_{I\in\cD}\subseteq X$ is finitely supported and $X$ is a Banach space.
This paper continues  \cite{pfxm1} and is related in spirit to \cite{gmp}. 
In particular, we are motivated by extrapolation properties of vector valued 
martingale transforms, i.e. maps of type
\begin{equation}\label{eqn:MT}
 \sum_{ I \in \cD}  a_I  h_I \to \sum_{ I \in \cD } c_I a_I h_I
\end{equation}
where  $(a_I)_{I\in \cD}\subseteq X$ is finitely supported
and $(c_I)_{I\in\cD}  \in \ell_\infty(\cD). $
Extrapolation theorems for these martingale transforms were widely 
studied in the literature and go back, for example, to Maurey \cite{Mau} and 
Burkholder-Gundy \cite{Bur-G} (see \cite{Burk3} for a general overview). 
In our setting these classical
theorems state that if (\ref{eqn:MT}) is bounded on  $L^p_X $  for some $ p\in (1,\infty) $, 
then it is bounded 
on  $L^q_X $ for all  $q\in (1,\infty)$. The significance of those theorems can be 
already seen in the scalar valued setting: Since a martingale transform 
is trivially bounded on $L^2 ,$ extrapolation yields its boundedness 
on each of the spaces  $L^q$ with $q\in (1,\infty)$.
The aim of this paper is to analyze the extrapolation properties of the family 
$\Id _{X} \otimes T_{p,\tau}.$


In Section \ref{sec:examples} we start by two examples.
Example \ref{example:rearrangement_umd} shows 
that the continuity of a 'typical' permutation $\Id _{X} \otimes T_{p,\tau}$ already 
implies that $X$ has to have the \umd-property. The second example provides a permutation 
such that the continuity of $\Id _{X} \otimes T_{p,\tau}$ with $p\in (1,2]$ implies the 
type $p$ property  of the Banach space $X$. As a consequence we deduce in 
Corollary \ref{corollary:example} that one does not have an upwards extrapolation: 
For $X=\ell_p$ and $p\in (1,2)$ (so that $X$ is, in particular, a \umd-space)
there is a permutation $\tau$ such that $\Id _{X} \otimes T_{p,\tau}$ is continues, but
$\Id _{X} \otimes T_{q,\tau}$ fails to be continuous for $q\in (p,2]$.

The natural question arises whether we still have a one-sided extrapolation meaning that 
the boundedness of $\Id _{X} \otimes T_{p,\tau}$ implies that one of
$\Id _{X} \otimes T_{q,\tau}$ in the case $1<q<p<2$.  


In Section \ref{sec:maurey} we answer this to the  positive for permutations $\tau$
satisfying the assumption $|\tau(I)|=|I|$. The results are formulated in 
Theorem \ref{theorem:one_sided_extrapolation_new_new} and 
Corollary \ref{cor:one_sided_extrapolation_under_UMD} and proved by 
transferring Maurey's classical argument \cite{Mau} to the permutation case via 
Proposition \ref{proposition:maximal_inequality_condition_Semenov}.
In Corollary \ref{cor:one_sided_extrapolation_under_UMD} we extrapolate 
the boundedness of $\Id _{X} \otimes T_{p,\tau}$ for a $\umd$-space $X$ and $p \in (1,2)$
{\em downwards to 1} to the boundedness of 
$\Id _{X} \otimes T_{q,\tau}$ for $q\in (1,p)$. 


In Section \ref{sec:carleson} we do not assume anymore the condition 
$|\tau(I)|=|I|$. In Corollaries \ref{corollary:downwards_extrapolation} and
\ref{corollary:downwards_extrapolation_semenov} we obtain a one-sided extrapolation as well.
By duality Corollary  \ref{corollary:downwards_extrapolation} yields a two-sided 
extrapolation in Theorem \ref{theorem:extraisolp}:
We show for a \umd-space $X$ that if $\Id _{X} \otimes T_{p,\tau}  $ is an isomorphism 
on some  $L^p_{X,0} $ 
with $ 1 < p \ne 2 < \infty$, then
the rearrangement $\Id _{X} \otimes T_{q,\tau}$ is an isomorphism on $L^q_{X,0}$
for each  $q\in (1,\infty)$.
Thus for a $\umd$-space valued  rearrangement  the property of 
being an isomorphism extrapolates across the entire scale of $L^q_{X,0} $ spaces, 
$q\in (1,\infty)$ -- just as for martingale transforms or for scalar valued 
rearrangements $T_{q,\tau}: L^q_0 \to L^q_0, $   see \cite{pfxm1}.

The extrapolation properties of {\it scalar} valued rearrangement operators
are a direct consequence of Pisier's re-norming of $H^1, $ 
\[       \|g\|_{H^1}^{1-\theta}  
    \sim \sup \{ \| \sum |g_I|^{1-\theta} |w_I|^\theta h_I \|_{L^p} : \|w\|_{L^2} = 1 \},\]
where $p\in (1,2)$, $1/p = 1-(\theta/2),$ 
$g = \sum g_I h_I ,$  and $w = \sum w_I h_I .$
This well known fact is recorded for instance in \cite{pfxm2}
and was  exploited further in \cite{gmp}.
As Pisier's re-norming of $H^1$ uses the lattice structure of 
${L^p},$ our analysis of the 
{\it vector} valued case circumvents its  use  and 
relies  instead on combinatorial and geometric properties of
$\tau$ that hold when $T_{p,\tau}$ is an isomorphism \cite{pfxm1}.

 
\section{Preliminaries}
In the following we equip the unit interval $[0,1)$ with the Lebesgue measure
$\lambda$.
The set of dyadic intervals of length $2^{-k}$ is denoted by $\cD_k$, the set of all 
dyadic intervals by $\cD$, and 
$\ftnd_k := \sigma (\cD_k)$. 
Given $I\in \cD$, we use $Q(I):= \{ K \subseteq I : K \in \cD \}$
and  $h_I$ denotes the $L_\infty$-normalized Haar
function supported on $I$. 
For a Banach space $X$ we let $L^p_{X}= L^p_{X}([0,1))$ be the
space of all Radon random variables $f:[0,1)\to X$ such that
$\| f \|_{L_X^p}^p := \int_0^1 \| f(t)\|_X^p dt <\infty$
and $L^p_{X,0}$ be the sub-space of mean zero random variables, where 
$L^p= L_{\bK}^p([0,1))$ and $L_0^p= L_{\bK,0}^p([0,1))$ if nothing is said
to the contrary with $\bK\in \{\bR,\bC\}$. 
To avoid artificial special cases we assume that the Banach spaces 
are at least of dimension one.

\paragraph{Spaces of type and cotype.}
Let $1 \le p \le 2 \le q < \infty$.
A Banach space $X$ is of {\em type $p$} ({\em cotype $q$}) provided that
there is a constant $c>0$ such that for all $n=1,2,...$ and 
$a_1,a_2,...,a_n\in X$ one has that
\[      \left \|  \sum_{k=1}^n r_k a_k \right \|_{L^p_X} 
    \le c \left ( \sum_{k=1}^n \| a_k \|^p_X \right )^\frac{1}{p}
    \left ( 
 \left ( \sum_{k=1}^n \| a_k \|^q_X \right )^\frac{1}{q}
   \le c
     \left \|  \sum_{k=1}^n r_k a_k \right \|_{L^q_X} \right )
, \]
where $r_1,r_2,...$ denote independent Bernoulli random variables.
We let ${\rm Type _p(X)} := \inf c$
(${\rm Cotype _q(X)} := \inf c$).

\paragraph{\umd-spaces.}
A Banach space $X$ is called {\em \umd-space} provided that for some $p\in (1,\infty)$
(equivalently, for all $p\in (1,\infty)$) there is a constant $c_p>0$ such that
\[         \sup_{\theta_k\in [-1,1]} 
           \noo \sum_{k=1}^n \theta_k d_k \rrm_{L^p_X}
   \le c_p \noo \sum_{k=1}^n          d_k \rrm_{L^p_X} \]
for all $n=1,2,...$ and all martingale difference sequences 
$(d_k)_{k=1}^n \subseteq L_X^1(\ftnd_n)$ with respect to $(\ftnd_k)_{k=0}^n$, i.e.
$d_k$ is $\ftnd_k$-measurable and
$\bE (d_k|\ftnd_{k-1})=0$ for $k=1,...,n$. The infimum of all possible 
$c_p>0$ is denoted by $\umd_p(X)$.
\medskip

Using \cite[page 12]{Burk4} it follows that $\umd_p(X)=\inf d_p$,
where the infimum is taken over all $d_p>0$ such that
\[         \sup_{\theta_I\in [-1,1]} 
           \noo \sum_{I\in\cD} \theta_I a_I h_I \rrm_{L^p_X}
   \le d_p \noo \sum_{I\in\cD}          a_I h_I \rrm_{L^p_X} \]
for all finitely supported $(a_I)_{I\in\cD}\subseteq X$.
An overview about \umd-spaces can be found in \cite{Burk3}. 

\paragraph{Hardy spaces.}
We recall the definition of Hardy spaces we shall use.

\begin{definition}\label{definition:hardy_spaces} \rm
\begin{enumerate}[(i)]
\item A function $a\in L^1_{X,0}(\ftnd_N)$, where $N\ge 1$, is called 
      {\em atom} provided there exists a stopping time 
      $\nu:\Omega\to \{ + \infty,0,...,N \}$ such that
      \begin{enumerate}[{\rm (a)}]
      \item $a_n:= \bE (a|\ftn_n) = 0$ on $\{ n \le \nu \}$ for $n=0,...,N$,
      \item $\| a \|_{L^\infty_X} \prob(\nu < \infty ) \le 1$.
      \end{enumerate}
\item The space $H^{1,at}_X(\ftnd_N)$  is given by the norm
      \[ \| f \|_{H^{1,at}_X} := \inf \sum_{k=1}^\infty |\mu_k|, 
         \quad 
         f \in L_{X,0}^1(\ftnd_N), \]
      where the infimum is taken over all sequences 
      $(\mu_k)_{k=1}^\infty \subset [0,\infty)$ and atoms
      $(a^k)_{k=1}^\infty$ such that
      $f = \sum_{k=1}^\infty \mu_k a^k$ in $L^1_X(\ftnd_N)$. 
\item Given $p\in [1,\infty)$, the space $H^p_X(\ftnd_N)$ is given by the norm
      \[    \| f \|_{H^p_X} 
         := \left ( \bE \sup_{n=0,...,N} \| \bE (f|\ftnd_n) \|_X^p\right )^{\frac{1}{p}}, 
         \quad 
         f \in L_{X,0}^p(\ftnd_N). \]
\end{enumerate}
\end{definition}
\bigskip

For an atom $a$ we have that $a=0$ on $\{ \nu = \infty \}$,
${\rm supp}(a) \subseteq \{ \nu < \infty \}$, and
\[ \bE \|a\|_X \le \| a \|_{L^\infty_X} \prob(\nu < \infty)
               \le 1. \]
The following inequality is well-known 
(see \cite{Bernard-Mais} and \cite{Coifman}, cf. \cite{Weisz}): 
\begin{equation}
\label{eqn:H1_H1at}
         \| f \|_{H^1_X(\ftnd_N)} 
   \le    \| f \|_{H^{1,at}_X(\ftnd_N)} 
   \le 18 \| f \|_{H^1_X(\ftnd_N)}. 
\end{equation}

\paragraph{Rearrangement operators.}
Let $\tau : \cD \to \cD $ be an injective map.
Given a Banach space $X$ and $p\in [1,\infty)$, we define the rearrangement 
operator $\Id_X \otimes T_{p,\tau}$ on finite linear combinations of Haar 
functions as 
\[ \Id _{X} \otimes T_{p,\tau} : \sum  a_I\frac{h_I}{|I|^{1/p}} \to 
   \sum a_I  \frac{h_{\tau(I)}}{|\tau(I)|^{1/p} }, \quad a_I \in X, \]
and let
\[
      \| \Id_X \otimes T_{p,\tau} \| 
  := \sup \left \{ \left 
          \| \sum_{I\in\cD} a_I \frac{h_{\tau(I)}}{|\tau(I)|^{1/p} }
          \right \|_{L_X^p} : \left \| \sum_{I\in\cD} a_I \frac{h_I}{|I|^{1/p} } 
                              \right \|_{L_X^p} \le 1 \right \} \]
where the supremum is taken over all finitely supported 
$(a_I)_{I\in\cD}\subseteq X$.
In the case $ \| \Id_X \otimes T_{p,\tau} \| < \infty$ we say that
$\Id_X \otimes T_{p,\tau}$ is bounded because it can be 
continuously extended to $L_{X,0}^p([0,1))\to L_X^p([0,1))$. 
The dependence on $p$ of the operator $T_{p,\tau}$ disappears when the 
injection  $\tau : \cD \to \cD $ satisfies 
\[  |\tau(I)|= |I|,\quad I \in \cD,  \]
so that we also use $T_{\tau}=T_{p,\tau}.$ 
 
\paragraph{Semenov's condition.}
For a non-empty collection $\mathcal{C}$ of dyadic intervals we let
$ \mathcal{C}^*:= \bigcup_{I\in \mathcal{C}} I . $
A rearrangement  $\tau:\cD\to\cD$ with 
\[ |\tau(I)|=|I| \]
satisfies {\em Semenov's condition} if there is a $\kappa\in [1,\infty)$ such that
\begin{equation}\label{eqn:Semenov_condition}
    \sup_{\cC \sbe \cD} \frac{|\tau(\cC) ^* |}{|\cC ^* |} 
\le \kappa 
 <  \infty.
\end{equation}
Given $p\in (1,2)$, Semenov's theorem \cite{Sem,Sem-Stoeckert}
asserts that  under the restriction $|\tau(I)|=|I|$,
condition \eqref{eqn:Semenov_condition} 
is  equivalent to the boundedness of
$T_\tau : L^p_0([0,1)) \to L^{p}([0,1))$.

\paragraph{Carleson's constant.}
For a non-empty collection  $\cE \sbe \cD$ the {\em Carleson constant} is 
given by 
\[    \y \cE \yy 
   := \sup_{I\in \cE}\frac{1}{|I|} \sum_{J\sbe I, \, J \in \cE} |J| . \]
The Carleson constant is linked  to rearrangement operators by the following 
theorem \cite[Theorems 2 and 3]{pfxm1}:
For a bijection $\tau:\cD\to\cD$
the assertion that for some (all) $p\in (1,\infty)$ with $p\not = 2$ one has 
\[      \| \Id_\bK \otimes T_{p,\tau}: L^p_{X,0} \to L^p_X \|
         \cdot \|\Id_\bK \otimes T_{p,\tau^{-1}}: L^p_{X,0} \to L^p_X \|
     <   \infty \]
is equivalent to the existence of an
$A\ge 1$ such that
\[     \frac{1}{A}  \y      \cE   \yy
   \le              \y \tau(\cE ) \yy
   \le           A  \y      \cE   \yy \] 
for all non-empty $\cE\subseteq \cD$.


\section{Two  examples}
\label{sec:examples}

In this section we consider bijections $\tau:\cD\to\cD$ such
that $|\tau(I)|=|I|$ for all $I\in\cD$ and provide examples which 
show that $\umd_p (X) $ and ${\rm Type}_p (X)$
may both be obstructions to the boundedness of 
\[ \Id_X \otimes T_{\tau} : L^{p}_{X,0} \to L^{p}_X. \]
>From that it becomes clear 
that Semenov's boundedness criterion \cite{Sem}
does not have a direct correspondence in  
the vector valued case.  

\begin{example} \label{example:rearrangement_umd}\rm
Let $\tau_0 : \cD \to \cD $ be the injection that 
leaves invariant the intervals of the even numbered dyadic levels. 
On the odd numbered dyadic levels we define $\tau_0$
to exchange   the dyadic intervals contained in $[0,1/2) $  with  
those contained in $ [1/2,1)$ by
the  shifts 
\[  \tau_0  (I) = I + \frac12  \text{  if  } I \sbe  [0,1/2)
    \text{  and  } 
    \tau_0  (I) = I -\frac12  \text{  if  } I \sbe  [1/2,1).\]
Then one has the following:
\begin{enumerate}[(i)]
\item The rearrangement $\tau_0=\tau_0^{-1}$ 
      satisfies Semenov's condition with $\kappa=2$ so that
      $T_{\tau_0}$ is an isomorphism on $L^p_0$ for $p\in (1,\infty). $
\item For $p\in (1,\infty)$ one has
      \begin{equation}\label{eqn:umd-permuation}
           \frac{1}{3}\umd_p (X) 
      \le  \|\Id_X \otimes T_{\tau_0} : L^p_{X,0} \to L^p_X\|
      \le  2\umd_p (X)
      \end{equation}
      so that the boundedness of $\Id_X \otimes T_ {\tau_0}$ on $L^p_{X,0}$,
      $p\in (1,\infty)$, holds precisely when $X$ satisfies the 
      \umd-property. 
\end{enumerate}
\end{example} 
\begin{proof}
Assertion (i) is obvious so that let us turn to (ii) and 
let $N\ge 2$ be even and recall that $\cD_k$ is the set of dyadic
intervals of length $2^{-k}.$ For  $k\ge 1$  define 
\[ \cD_k^- := \{ I\in \cD_k : I \subseteq [0,1/2) \}. \]
The testing functions by which we link  the boundedness 
of  $\Id_X \otimes T_{\tau_0}$  to  the $\umd$-property of $X$ are
\[ f= \sum_{k=1}^N   \sum_{I\in\cD_k^-} a_I h_I 
   \quad\text{and}\quad
   g = \sum_{k=1}^{N/2}  \sum_{I\in\cD_{2k}^-} a_I h_I , \]
where $a_I\in X.$ 
Note that $g$ is obtained from $f$ by deleting every 
second dyadic level from the Haar expansion of $f$ starting with level 1.
Consequently,
\equa
      \left \| \sum_{k=1}^N (-1)^k \sum_{I\in\cD_k^-}a_I h_I
      \right \|_{L_X^p}
& = & \left \| f - 2 g \right \|_{L_X^p} \\
&\le& \| f \|_{L_X^p} + 2 \| g \|_{L_X^p} \\
&\le& \| f \|_{L_X^p} + 2 \| (\Id_X \otimes T_{\tau_0}) f \|_{L_X^p} \\
&\le& \left ( 1 + 2 \|\Id_X \otimes T_{\tau_0} : L^p_{X,0} \to L^p_X\| \right )
      \| f \|_{L_X^p}  \\
&\le& 3 \|\Id_X \otimes T_{\tau_0} : L^p_{X,0} \to L^p_X\|
      \left \| \sum_{k=1}^N   \sum_{I\in\cD_k^-}a_I h_I 
      \right \|_{L_X^p}\!\!.
\tion
In our definition of $\umd_p(X)$ it is sufficient to consider $\pm 1$ 
transforms (this is a well-known extreme point
argument). Furthermore, by an appropriate augmentation of the filtration 
we can even restrict ourselves to alternating  sequences of
signs $\pm 1$. Hence we obtain the left hand side of 
\eqref{eqn:umd-permuation} (in fact, we can think to work on $[0,1/2)$ as probability 
space after re-normalization).

For the right hand side of  \eqref{eqn:umd-permuation} we fix some $N\ge 1$ and 
observe that the action of the 
above rearrangement is an isometry  when restricted to 
$\sum_{k \mbox{ odd},0\le k\le N} \sum_{k\in\cD_k}a_I h_I$ and an isometry  when restricted to 
$\sum_{k \mbox{ even},0 \le k\le N} \sum_{k\in\cD_k} a_I h_I$.
Using the $\umd$-property of $X$,  we merge this information
to obtain the boundedness of the 
rearrangement operator on the entire space  $L_{X,0}^p.$ 
\end{proof}
 
\begin{example} \label{example:rearrangement_type}\rm
There exists a rearrangement $\tau_0 : \cD \to \cD $ with
$ |\tau_0(I)| = |I|$ satisfying the Semenov condition 
(\ref{eqn:Semenov_condition}), 
such that for all $p\in (1,2]$ and all Banach spaces $X$ one has that
\[ \text{Type}_p(X) \le \|\Id_X \otimes T_{\tau_0} : L^p_{X,0} \to L^p_X\|. \]
\end{example}

\begin{proof}
(a) Fix $n\ge 1$ and assume disjoint dyadic intervals $I_0,...,I_n$ of
the same length, one after each other starting with $I_0$. 
Let 
\[ \cA_k := \left\{ I\in\cD : I \subseteq I_k, |I|=2^{-k}|I_k|
            \right \} \]
for $k=1,...,n$. We define a permutation $\tau_n:\cD\to\cD$ such that
\begin{enumerate}[(i)]
\item $\cA_k$ is shifted from $I_k$ to $I_0$ for each
      $k=1,...,n$,
\item all subintervals of $I_0$ of length $2^{-k}|I_0|$, $k=1,...,n$, are
      shifted to $I_1$,
\item all subintervals of $I_1$ of length $2^{-k}|I_1|$, $k=2,...,n$, are
      shifted to $I_2$, \\...,
\item all subintervals of $I_{n-1}$ of length $2^{-n}|I_{n-1}|$ are
      shifted to $I_n$.
\end{enumerate}
On all other intervals $\tau_n$ acts as an identity. One can check
that $\tau_n$ satisfies Semenov's condition with $\kappa=3$.
Moreover, for $a_1,...,a_n\in X$,
\equa
      \int_0^1 \noo \sum_{k=1}^n \sum_{I\in \cA_k} a_k h_I(t) \rrm^p_X dt
& = & |I_0| \sum_{k=1}^n \| a_k \|^p_X, \\
      \int_0^1 \noo \sum_{k=1}^n \sum_{I\in \cA_k} a_k h_{\tau_n(I)}(t)  
               \rrm^p_X dt
& = & |I_0| \noo \sum_{k=1}^n r_k a_k \rrm_{L^p_X}^p,
\tion
so that
\[     \noo \sum_{k=1}^n r_k a_k \rrm_{L^p_X}
   \le \| \Id_X \otimes T _{\tau_n} : L^p_{X,0} \to  L^p_X \|
       \left ( \sum_{k=1}^n \| a_k \|^p\right )^\frac{1}{p} \]
where $r_1,...,r_n$ are independent Bernoulli random variables.
\medskip

(b) Now we 'glue together' the permutations $\tau_1,\tau_2,...$:
to this end we find pairwise disjoint dyadic  intervals 
$I_0^1,I_1^1\subseteq [0,1/2)$, \hspace{.1em}
$I_0^2,I_1^2,I_2^2 \subseteq [1/2,3/4)$, \hspace{.1em}
$I_0^3,I_1^3,I_2^3,I_3^3\subseteq [3/4,7/8)$,\ldots, where $I_0^n,...,I_n^n$ 
is a collection as in part (a). Defining the permutation $\tau_0$ on
$I_0^n,...,I_n^n$ as in (a) for all $n=1,2,....$ and elsewhere as 
identity, we arrive at our desired permutation $\tau_0$.
\end{proof}
\medskip

\begin{cor}\label{corollary:example}
For the permutation $\tau_0$ from Example \ref{example:rearrangement_type},
$p\in (1,2)$, and $X:=\ell_p$ one has 
\[  \noo \Id_X\otimes T_{\tau_0}: L^p_{X,0} \to L^p_X \rrm < \infty \]
but
\[  \noo \Id_X\otimes T_{\tau_0}: L^q_{X,0} \to L^q_X \rrm = \infty
    \sptext{1}{for all}{1}
    q\in (p,2]. \]
\end{cor}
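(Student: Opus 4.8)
The plan is to treat the two assertions separately: the boundedness on $L^p_{\ell_p,0}$ I would obtain from the special lattice structure of $\ell_p$ together with Semenov's theorem, while the unboundedness on $L^q_{\ell_p,0}$ for $q\in(p,2]$ I would read off from the type obstruction recorded in Example \ref{example:rearrangement_type}. Neither half is deep; the whole point of the example is that these two facts pull in opposite directions.

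For the boundedness I would use the isometric identification $L^p_{\ell_p}([0,1))\cong \ell_p(L^p([0,1)))$, valid because for $f\in L^p_{\ell_p}$
\[ \int_0^1 \noo f(t)\rrm_{\ell_p}^p\,dt = \int_0^1 \sum_{n} |f_n(t)|^p\,dt = \sum_{n} \noo f_n\rrm_{L^p}^p, \]
and under which $L^p_{\ell_p,0}$ corresponds to $\ell_p(L^p_0)$. A finitely supported Haar series $\sum_I a_I h_I/|I|^{1/p}$ with $a_I=(a_I^{(n)})_n\in\ell_p$ then corresponds to the sequence of scalar Haar series $(\sum_I a_I^{(n)}h_I/|I|^{1/p})_n$, and $\Id_{\ell_p}\otimes T_{\tau_0}$ acts coordinatewise, i.e.\ as the $\ell_p$-direct sum $\bigoplus_n T_{\tau_0}$. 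Estimating each coordinate separately gives the upper bound $\noo T_{\tau_0}:L^p_0\to L^p\rrm$, and putting all mass in a single coordinate gives the matching lower bound, so that
\[ \noo \Id_{\ell_p}\otimes T_{\tau_0}:L^p_{\ell_p,0}\to L^p_{\ell_p}\rrm = \noo T_{\tau_0}:L^p_0\to L^p\rrm. \]
Since $\tau_0$ satisfies Semenov's condition with $\kappa=3$ and $p\in(1,2)$, the right-hand side is finite by Semenov's theorem, which settles the first claim.

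For the unboundedness on $L^q_{\ell_p,0}$ with $q\in(p,2]$ I would invoke the estimate of Example \ref{example:rearrangement_type},
\[ \text{Type}_q(\ell_p) \le \noo \Id_{\ell_p}\otimes T_{\tau_0}:L^q_{\ell_p,0}\to L^q_{\ell_p}\rrm, \]
and show that $\ell_p$ has no type $q$, so that the left-hand side is $+\infty$. Testing with the unit vectors $e_1,\dots,e_n\in\ell_p$, every sign realization gives $\noo\sum_{k=1}^n r_k e_k\rrm_{\ell_p}=n^{1/p}$, hence $\noo\sum_{k=1}^n r_k e_k\rrm_{L^q_{\ell_p}}=n^{1/p}$, whereas $\big(\sum_{k=1}^n\noo e_k\rrm^q\big)^{1/q}=n^{1/q}$. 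As $1/p>1/q$, the ratio $n^{1/p-1/q}$ is unbounded in $n$, so no finite type-$q$ constant exists and $\text{Type}_q(\ell_p)=\infty$. Combined with the displayed inequality this forces the operator norm on $L^q_{\ell_p}$ to be infinite.

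The computations are elementary, and the only point I would treat with any care is the first identification: verifying that the coordinatewise action genuinely realizes $\Id_{\ell_p}\otimes T_{\tau_0}$ as $\bigoplus_n T_{\tau_0}$ on $\ell_p(L^p_0)$, and that restricting to a single nonzero coordinate shows the direct-sum norm \emph{equals}, rather than merely bounds, the scalar norm $\noo T_{\tau_0}:L^p_0\to L^p\rrm$. Once this bookkeeping is in place, both halves follow immediately from Semenov's theorem and from Example \ref{example:rearrangement_type}.
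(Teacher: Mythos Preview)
Your proof is correct and follows the same approach as the paper: the boundedness comes from the Fubini-type identification $L^p_{\ell_p}\cong \ell_p(L^p)$ together with Semenov's theorem, and the unboundedness from the fact that $\ell_p$ is not of type $q$ for $q\in(p,2]$ combined with the inequality of Example~\ref{example:rearrangement_type}. You supply more detail than the paper (in particular the explicit equality of the vector and scalar operator norms, and the unit-vector test for the failure of type $q$), but the strategy is identical.
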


\begin{proof}
The first relation follows from Fubini's theorem and the Semenov 
condition. On the other side, $X=\ell_p$ is not of type $q$ as long 
as  $q\in (p,2]$ so that $T_{\tau_0}$ fails to be bounded in $L^q_{X,0}$.
\end{proof}  


\section{Maurey's extrapolation method and the Semenov condition}
\label{sec:maurey}

By Corollary \ref{corollary:example} we have seen that an extrapolation
from $p$ to $q$ fails in general if $q\in (p,2]$. Here one should note
that the boundedness of $\Id_X\otimes T_\tau:L^p_{X,0}\to L_X^p$ implies the boundedness
of $T_\tau:L^p_0\to L^p$, hence the Semenov condition.
The aim of this section is to show that, by Maurey's extrapolation 
method \cite{Mau}, one has an extrapolation from
$p$ to $q$ in the case that $q\in (1,p)$.
\medskip

\begin{definition}\rm
Let $\tau:\cD\to\cD$ be a permutation with $|\tau(I)|=|I|$.
An operator $A$ which maps $f \in L_{X,0}^1(\ftnd_n)$ into a non-negative
random variable $A(f): [0,1)\to [0,\infty)$ and which is homogeneous
(i.e. $A(\mu f) = |\mu| A(f)$, $\lambda$-a.s., for all $\mu\in\bK$), 
where $n\ge 1$, is $\tau$-monotone with
constant $c>0$ provided that one has, $\lambda$-a.s., that
\begin{equation}\label{eqn:monotonicity_condition}
    A\left ( \sum_{k=1}^n \gamma_k d_k  \right )
\le c \sup_{k=1,...,n} | P_{k-1,\tau} (\gamma_k)|
    A\left ( \sum_{k=1}^n d_k \right )
\end{equation}
for all
\[   d_k(t) 
  = \sum_{I\in \cD_{k-1}} a_I h_{I}(t), \quad a_I\in X, \]
and non-decreasing $(\gamma_k)_{k=1}^n$ with
\[   \gamma_k (t)
   = \sum_{I\in\cD_{k-1}} \gamma_k(I) I_{I}(t), \quad \gamma_k(I) 
     \ge 0, \]
where
$     P_{k-1,\tau} (\gamma_k) 
   := \sum_{I\in\cD_{k-1}} \gamma_k(I) I_{\tau(I)}(t)$.
\end{definition}
\bigskip

Note that $P_{k,\tau}(\gamma)$ is correctly defined for all 
$\gamma:[0,1)\to\bR$
that are constant on the dyadic intervals of length $2^{-k}$.

\begin{theorem}\label{theorem:one_sided_extrapolation_new_new}
For a permutation $\tau:\cD\to\cD$ with $|\tau(I)|=|I|$ 
the following assertions are equivalent:
\begin{enumerate}[{\rm (i)}]
\item The permutation $\tau$ satisfies the Semenov condition
      {\rm (\ref{eqn:Semenov_condition})}.
\item For all $1<q<p<\infty$, Banach spaces $X$, $n=1,2,....$, and 
      $\tau$-monotone operators $A$, defined on $L^1_{X,0}(\ftnd_n)$, 
      with constant $c>0$ one has that 
      \[       \| A : L^q_{X,0}(\ftnd_n)\to L^q([0,1)) \|  
         \le d \| A : L^p_{X,0}(\ftnd_n)  \to L^p([0,1)) \| \]
      where $d=d(p,q,c)>0$ and
      \[    \| A \|_r
          = \| A : L^r_{X,0}(\ftnd_n)\to L^r([0,1)) \|
         := \sup \left \{ \|A(f)\|_{L^r} : \| f \|_{L_{X,0}^r} \le 1 \right \}. \]

\end{enumerate}
\end{theorem}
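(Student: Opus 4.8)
The plan is to prove the two implications separately, with essentially all of the work in (i) $\Rightarrow$ (ii), which is Maurey's reweighting argument read through the monotonicity inequality \eqref{eqn:monotonicity_condition}, while (ii) $\Rightarrow$ (i) follows by testing on a single, well-chosen $\tau$-monotone operator. For (i) $\Rightarrow$ (ii) I fix $1<q<p<\infty$, a Banach space $X$, a $\tau$-monotone $A$ on $L^1_{X,0}(\ftnd_n)$ with constant $c$, and $f=\sum_{k=1}^n d_k\in L^q_{X,0}(\ftnd_n)$ with $d_k(t)=\sum_{I\in\cD_{k-1}}a_I h_I(t)$. Writing $f_j=\bE(f|\ftnd_j)=\sum_{k\le j}d_k$ and $f^*_{k-1}:=\max_{0\le j\le k-1}\|f_j\|_X$ (which is $\ftnd_{k-1}$-measurable and non-decreasing in $k$), I set $\alpha:=(p-q)/p\in(0,1)$ and introduce the \emph{non-decreasing} predictable weight $\tilde\gamma_k:=(f^*_{k-1})^{\alpha}$, together with $\gamma_k:=1/\tilde\gamma_k=(f^*_{k-1})^{-\alpha}$, after the routine regularization of replacing $f^*_{k-1}$ by $f^*_{k-1}\vee\delta$ and letting $\delta\downarrow 0$ at the end. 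Putting $g:=\sum_k\gamma_k d_k\in L^p_{X,0}(\ftnd_n)$, the differences $\gamma_k d_k=\sum_{I\in\cD_{k-1}}\gamma_k(I)a_I h_I$ are of the admissible form and $\tilde\gamma_k(\gamma_k d_k)=d_k$, so applying \eqref{eqn:monotonicity_condition} with weights $\tilde\gamma_k$ and differences $\gamma_k d_k$ gives, $\lambda$-a.s.,
\[ A(f)=A\Big(\sum_k\tilde\gamma_k(\gamma_k d_k)\Big)\le c\,w\,A(g),\qquad w:=\sup_k P_{k-1,\tau}(\tilde\gamma_k). \]
The key move is precisely this \emph{backward} use of $\tau$-monotonicity: the bound for $A(f)$ is transferred to $A(g)$ at the price of the rearranged predictable weight $w$.

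Choosing $s$ by $1/q=1/p+1/s$, i.e. $s=pq/(p-q)$, Hölder's inequality yields $\|A(f)\|_{L^q}\le c\|w\|_{L^s}\|A(g)\|_{L^p}\le c\|w\|_{L^s}\,\|A\|_p\,\|g\|_{L^p_X}$, and it remains to estimate the two factors by powers of $\|f\|_{L^q_X}$. For $g$ I would invoke the vector-valued martingale form of Maurey's lemma: for the specific weight $\gamma_k=(f^*_{k-1})^{-\alpha}$ one has $\|g\|_{L^p_X}\lesssim_{p,q}\|f^*\|_{L^q}^{q/p}\lesssim\|f\|_{L^q_X}^{q/p}$ by Doob's inequality (here $q>1$); this estimate involves only $f$ and its own maximal function and holds for every Banach space $X$, so the absence of a $\umd$-hypothesis is no obstacle. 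For $w$ note that $\alpha s=q$, so \emph{without} $\tau$ one would simply have $\|(f^*)^{\alpha}\|_{L^s}=\|f^*\|_{L^q}^{\alpha}\lesssim\|f\|_{L^q_X}^{\alpha}$; the real point is that inserting the rearrangement $P_{k-1,\tau}$ and the supremum over $k$ costs only a constant. This is exactly where the Semenov condition \eqref{eqn:Semenov_condition} is used, through Proposition \ref{proposition:maximal_inequality_condition_Semenov}: by the layer-cake formula $P_{k-1,\tau}(\tilde\gamma_k)=\alpha\int_0^\infty t^{\alpha-1}\mathbf 1_{\tau(\cC_t^{k-1})^*}\,dt$ with $\cC_t^{k-1}=\{I\in\cD_{k-1}:I\subseteq\{f^*_{k-1}>t\}\}$, and $|\tau(\cC_t^{k-1})^*|\le\kappa\,|\{f^*_{k-1}>t\}|$ by \eqref{eqn:Semenov_condition}, giving $\|w\|_{L^s}\lesssim_{\kappa}\|f\|_{L^q_X}^{\alpha}$. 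Multiplying the two estimates produces $\|A(f)\|_{L^q}\lesssim\|A\|_p\,\|f\|_{L^q_X}^{\,q/p+\alpha}=\|A\|_p\,\|f\|_{L^q_X}$, which is (ii) with $d=d(p,q,c,\kappa)$.

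For the converse (ii) $\Rightarrow$ (i) I would test (ii) on the \emph{single} operator $A(f):=\big(\sum_k|T_\tau d_k|^2\big)^{1/2}$, taking $X=\bK$ scalar (which suffices, since (i) concerns $\tau$ alone). Because $T_\tau(\gamma_k d_k)=P_{k-1,\tau}(\gamma_k)\,T_\tau d_k$ pointwise, this $A$ is $\tau$-monotone with constant $c=1$, since $A(\sum_k\gamma_k d_k)=\big(\sum_k|P_{k-1,\tau}(\gamma_k)|^2|T_\tau d_k|^2\big)^{1/2}\le\sup_k|P_{k-1,\tau}(\gamma_k)|\,A(f)$; moreover $\|A\|_2=1$, because $|\tau(I)|=|I|$ forces $T_\tau$ to act level-wise as an $L^2$-isometry and the Haar system is orthogonal, so $\|A(f)\|_2^2=\sum_k\|T_\tau d_k\|_2^2=\sum_k\|d_k\|_2^2=\|f\|_2^2$. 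Applying (ii) with $p=2$ and any $q\in(1,2)$ gives $\|A:L^q_0\to L^q\|\le d(2,q,1)<\infty$; since $A(f)=S(T_\tau f)$ is the square function of the rearranged martingale, the Burkholder--Gundy equivalence \cite{Bur-G} yields $\|T_\tau f\|_{L^q}\sim\|A(f)\|_{L^q}$, so $T_\tau:L^q_0\to L^q$ is bounded, and Semenov's theorem \cite{Sem,Sem-Stoeckert} then forces \eqref{eqn:Semenov_condition}, i.e. (i).

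The main obstacle is the control of $\|w\|_{L^s}$ in the second paragraph, namely carrying the Doob/maximal bound across the rearrangement. The supremum over $k$ of the rearranged predictable weights cannot be handled by pointwise monotonicity in $k$: although the level sets $\{f^*_{k-1}>t\}$ increase with $k$, their $\tau$-images $\tau(\cC_t^{k-1})^*$ are produced by \emph{different} level rearrangements and are not nested, so $\sup_k\mathbf 1_{\tau(\cC_t^{k-1})^*}$ is a genuinely maximal object whose $L^s$-size is not given by the per-level Semenov bound alone. Overcoming this is precisely the content of Proposition \ref{proposition:maximal_inequality_condition_Semenov}, and it is the one step that genuinely uses hypothesis (i). A secondary, routine point is the $\delta$-regularization of $f^*_{k-1}$ and the verification that all constants are uniform in $\delta$, which is harmless because $n$ and the Haar support of $f$ are finite.
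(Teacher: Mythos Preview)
Your overall architecture matches the paper's exactly: for (i)$\Rightarrow$(ii) you split via H\"older with exponent $s$ (the paper calls it $r$), feed one factor through Maurey's Lemma~A and the other through Proposition~\ref{proposition:maximal_inequality_condition_Semenov}, then finish with Doob; for (ii)$\Rightarrow$(i) you test on the square function $A(f)=(\sum_k|T_\tau d_k|^2)^{1/2}$ with $p=2$, just as the paper does.

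There is, however, a genuine gap in your choice of predictable weight. You take $\tilde\gamma_k=(f^*_{k-1}\vee\delta)^\alpha$ and claim the Maurey-type bound $\|g\|_{L^p_X}\lesssim_{p,q}\|f^*\|_{L^q}^{q/p}$ for $g=\sum_k(f^*_{k-1}\vee\delta)^{-\alpha}d_k$. This fails already for $n=2$: take scalar $d_1=h_{[0,1)}$ and $d_2=N\,h_{[0,1/2)}$ with $N$ large. Then $f^*_0=0$, $f^*_1=1$, so $g=\delta^{-\alpha}h_{[0,1)}+N\,h_{[0,1/2)}$, whose $L^p$-norm blows up as $\delta\downarrow 0$; worse, since $\tilde\gamma_2\equiv 1$ here, $w\equiv 1$ and the \emph{product} $\|w\|_{L^s}\|g\|_{L^p}$ also blows up, so the ``routine regularization'' does not rescue the argument. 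The difficulty is that $f^*_{k-1}$ need not dominate $\|X_k\|_X$, and the Abel-summation proof of Maurey's Lemma~A needs exactly this domination.

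The fix is the paper's weight: set ${}^*X_k:=X^*_{k-1}+\sup_{l\le k}\|d_l\|_X$ and $\gamma_k:=({}^*X_k+\delta)^\alpha$. Because $\|d_k\|_X$ is constant on atoms of $\ftnd_{k-1}$ for Haar differences, ${}^*X_k$ is $\ftnd_{k-1}$-measurable and non-decreasing, so $\gamma_k$ is admissible for \eqref{eqn:monotonicity_condition}; and now $\|X_k\|_X\le{}^*X_k$ makes Maurey's Lemma~A \cite{Mau} go through, yielding $\|\sum_k d_k/\gamma_k\|_{L^p_X}\le\frac{p}{q}\big(\bE({}^*X_n+\delta)^q\big)^{1/p}$. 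The rest of your argument (H\"older, Proposition~\ref{proposition:maximal_inequality_condition_Semenov} applied to $Z_{k-1}=\gamma_k^{\,r}$, Doob, and the passage $\delta\downarrow 0$) then works verbatim, and your treatment of (ii)$\Rightarrow$(i) is correct as written.
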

\bigskip

Before we give the proof of 
Theorem~\ref{theorem:one_sided_extrapolation_new_new} 
we apply it to our original extrapolation problem. 
\smallskip

\begin{cor}\label{cor:one_sided_extrapolation_under_UMD}
Let $X$ be a $\umd$-space and let $ \tau: \cD \to \cD $ be a permutation
such that 
\[ |\tau ( I) | = |I| . \]
If, for some $p\in (1,2)$, one has that
\[ \Id_X \otimes T_\tau  : L^p_{X,0} \to  L^p_X \]
is bounded, then 
\[ \Id_X \otimes T_\tau  : L^q_{X,0} \to  L^q_X \]
is bounded for all  $q\in (1,p)$.
\end{cor}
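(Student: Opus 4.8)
The plan is to deduce Corollary~\ref{cor:one_sided_extrapolation_under_UMD} from Theorem~\ref{theorem:one_sided_extrapolation_new_new} by exhibiting an operator $A$ which is $\tau$-monotone and whose $L^r$-operator norms are comparable to $\|\Id_X \otimes T_\tau : L^r_{X,0}\to L^r_X\|$ for every $r$. The natural candidate is the \emph{square-function type} object associated with the rearranged martingale. Concretely, for $f = \sum_{I\in\cD} a_I h_I \in L^1_{X,0}(\ftnd_n)$ I would set
\[
   A(f)(t) := \noo \sum_{I\in\cD} a_I \frac{h_{\tau(I)}(t)}{\,|\tau(I)|^{1/p}\,} \cdot |I|^{1/p} \rrm_X,
\]
so that $A(f)$ is the pointwise $X$-norm of the image of $f$ under $\Id_X\otimes T_\tau$ (here I am using $|\tau(I)|=|I|$, so the normalizing factors cancel and $A$ is genuinely independent of $p$). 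Since $A(f)=\|(\Id_X\otimes T_\tau)f\|_X$ $\lambda$-a.s., we immediately get $\|A\|_r = \|\Id_X\otimes T_\tau : L^r_{X,0}\to L^r_X\|$ for each $r\in(1,\infty)$, so verifying the two operator-norm identities is routine and the whole content is reduced to the $\tau$-monotonicity of this particular $A$.

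The first step, then, is to observe that the hypothesis that $\Id_X\otimes T_\tau : L^p_{X,0}\to L^p_X$ is bounded forces $T_\tau : L^p_0\to L^p$ to be bounded (by testing on scalar-valued, i.e. one-dimensional, inputs, exactly as noted at the start of Section~\ref{sec:maurey}), whence by Semenov's theorem $\tau$ satisfies condition~\eqref{eqn:Semenov_condition}. This gives us assertion~(i) of Theorem~\ref{theorem:one_sided_extrapolation_new_new}, and therefore the extrapolation conclusion~(ii) holds for \emph{any} $\tau$-monotone $A$. The second step is to check that the operator $A$ above is $\tau$-monotone with an absolute constant. Finally, feeding this $A$ into~(ii) with the given $p$ and an arbitrary $q\in(1,p)$ yields $\|A\|_q \le d\,\|A\|_p < \infty$, which translated back is precisely the boundedness of $\Id_X\otimes T_\tau : L^q_{X,0}\to L^q_X$. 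The \umd-hypothesis enters only to guarantee, via the Burkholder-type equivalence recorded in the Preliminaries, that $A(f)=\|(\Id_X\otimes T_\tau)f\|_X$ controls and is controlled by the relevant martingale quantities; I would invoke it to pass between the square-function formulation used in the monotonicity estimate and the stated $L^q_X$-norm of the rearranged function.

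The main obstacle is the verification of the monotonicity inequality~\eqref{eqn:monotonicity_condition} for this $A$. One must show that when each block $d_k = \sum_{I\in\cD_{k-1}} a_I h_I$ is multiplied by a non-decreasing predictable multiplier $\gamma_k$ (constant on $\cD_{k-1}$), the resulting $A(\sum_k \gamma_k d_k)$ is dominated by $\sup_k |P_{k-1,\tau}(\gamma_k)|$ times $A(\sum_k d_k)$, $\lambda$-a.s. The delicate point is that $\tau$ moves each interval $I\in\cD_{k-1}$ to $\tau(I)$, which generally lies on a different part of $[0,1)$, so the predictable multiplier $\gamma_k$ acting at location $I$ must be transported to act at location $\tau(I)$ — this is exactly why the transported multiplier $P_{k-1,\tau}(\gamma_k)$, rather than $\gamma_k$ itself, appears on the right-hand side. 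I expect the heart of the argument to be an Abel-summation / layer-cake decomposition of the non-decreasing sequence $(\gamma_k)$ into a positive combination of indicator-type profiles $\mathbf{1}_{\{k\ge m\}}$, reducing the general case to truncations $\sum_{k\ge m} d_k$; on each such truncation the rearranged partial sums can be compared using the triangle inequality in $X$ pointwise, with the Semenov constant never actually needed for the monotonicity itself (it is needed only to secure step~(i)). Establishing that this reduction is valid $\lambda$-a.s. and that the constant $c$ is absolute is where the real work lies.
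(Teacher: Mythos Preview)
Your overall strategy---deduce Semenov's condition from the scalar case, then apply Theorem~\ref{theorem:one_sided_extrapolation_new_new} to a well-chosen $\tau$-monotone operator $A$---matches the paper. The gap is in the choice of $A$: the operator $A(f)=\|(\Id_X\otimes T_\tau)f\|_X$ is \emph{not} $\tau$-monotone, and your Abel-summation reduction cannot repair this.

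To see why, unravel the definitions at a fixed point $t\in[0,1)$. Since $|\tau(I)|=|I|$, the permutation $\tau$ maps $\cD_{k-1}$ to $\cD_{k-1}$, so for each $k$ there is a unique $I_k\in\cD_{k-1}$ with $t\in\tau(I_k)$. Writing $e_k:=(T_\tau d_k)(t)=a_{I_k}h_{\tau(I_k)}(t)\in X$ and $\beta_k:=P_{k-1,\tau}(\gamma_k)(t)=\gamma_k(I_k)\ge 0$, the required inequality~\eqref{eqn:monotonicity_condition} for your $A$ reads
\[
   \Bigl\|\sum_{k=1}^n \beta_k e_k\Bigr\|_X \le c\,\Bigl(\sup_k \beta_k\Bigr)\,\Bigl\|\sum_{k=1}^n e_k\Bigr\|_X .
\]
This fails already for $X=\bR$: take $n=2$, $e_1=1$, $e_2=-1$, $\beta_1=0$, $\beta_2=1$. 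Then the left side is $1$ and the right side is $0$. (Such data are easily realized by the construction above; note that the $\gamma_k$ are non-decreasing but the transported $\beta_k$ need not be, and in any case even non-decreasing $\beta_k$ would not help.) Your layer-cake reduction to indicators $\mathbf{1}_{\{k\ge m\}}$ leads to exactly the comparison $\|\sum_{k\ge m}e_k\|_X\le c\,\|\sum_{k}e_k\|_X$, which is the same false inequality.

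The fix, and this is what the paper does, is to insert a Rademacher average:
\[
   A\Bigl(\sum_{k=1}^n d_k\Bigr)(t):=\int_\Omega\Bigl\|\sum_{k=1}^n r_k(\omega)\,(T_\tau d_k)(t)\Bigr\|_X\,d\prob(\omega).
\]
Then the pointwise inequality becomes $\bE_\omega\|\sum_k r_k\beta_k e_k\|_X\le(\sup_k\beta_k)\,\bE_\omega\|\sum_k r_k e_k\|_X$, which is the scalar contraction principle and holds with $c=1$. The $\umd$-property is now genuinely needed, not for monotonicity, but for the equivalence $\|A(f)\|_{L^r}\sim\|(\Id_X\otimes T_\tau)f\|_{L^r_X}$: since the $T_\tau d_k$ are martingale differences with respect to $(\ftnd_k)$, $\umd$ removes the random signs (together with Kahane's inequality to pass between $L^1(\Omega)$ and $L^r(\Omega)$). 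With this $A$ your three-step outline goes through verbatim.
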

\smallskip

\begin{proof}
Because our assumption implies that 
$\Id_\bK\otimes T_\tau:L^p_0\to L^p$ is bounded it has to satisfy the
Semenov condition. We fix $n\ge 1$ and apply the previous theorem to
the operator $A$ defined,
for  $d_k = \sum_{I \in \cD_{k-1}} a_I h_I$ with $ a_I \in X$, as
      \[    A  \left (\sum_{k=1}^n d_k \right ) 
         := \int_\Omega \left \|  (\Id_X \otimes T_\tau)
            \left(\sum_{k=1}^n r_k(\omega) d_k \right)\right \|_X
            d \prob(\omega) \]
where $r_1,...,r_n$ are independent Bernoulli random variables.
It is easy to see that $A$ satisfies 
(\ref{eqn:monotonicity_condition}) with $c=1$. Moreover by the 
$\umd$-property we have 
\[        \left \| A \left ( \sum_{k=1}^n d_k \right ) \right \|_{L^p}
    \sim  \left 
\| (\Id_X \otimes T_\tau) 
   \left(\sum_{k=1}^n d_k \right) \right \|_{L_X^p}, \]
where the multiplicative constants do not depend on $n$.
Hence Theorem~\ref{theorem:one_sided_extrapolation_new_new} yields 
the assertion.
\end{proof}
\smallskip

The maximal inequality of  the following 
Proposition \ref{proposition:maximal_inequality_condition_Semenov} 
provides the link between  rearrangements satisfying 
Semenov's  condition  and Maurey's extrapolation technique
in \cite{Mau}.
\newpage

\begin{prop}\label{proposition:maximal_inequality_condition_Semenov}
Assume that Semenov's condition $(\ref{eqn:Semenov_condition})$ is satisfied
for a permutation $\tau$ with $|\tau(I)|=|I|$ and that 
$0\le Z_0 \le Z_1 \le \cdots \le Z_n$ is a sequence of functions
$Z_k:[0,1)\to [0,\infty)$, where $Z_k$ is constant on all dyadic
intervals of length $1/2^k$. Then one has that 
\[     \int_0^1 \sup_{k=0,...,n} (P_{k,\tau} (Z_k))(t) dt
   \le \kappa \int_0^1 Z_n(t) dt. \]
\end{prop}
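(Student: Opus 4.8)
The plan is to recognize this as a maximal inequality and to reduce it, via the distribution-function (layer-cake) identity, to a single application of Semenov's condition $(\ref{eqn:Semenov_condition})$ to a well-chosen amalgamated collection of dyadic intervals. First I would exploit that, since $\tau$ is a bijection of $\cD$ with $|\tau(I)|=|I|$, its restriction to each level $\cD_k$ is a bijection of $\cD_k$ onto itself; hence the images $\{\tau(I):I\in\cD_k\}$ are pairwise disjoint and tile $[0,1)$. Writing $Z_k=\sum_{I\in\cD_k}Z_k(I)\mathbf 1_I$ with $Z_k(I)\ge 0$, the operator $P_{k,\tau}$ simply transports the value $Z_k(I)$ from $I$ to $\tau(I)$, so $(P_{k,\tau}(Z_k))$ is an equimeasurable rearrangement of $Z_k$, and for every $s>0$
\[ \{t:(P_{k,\tau}(Z_k))(t)>s\}=\tau(\cC_k(s))^*,\qquad \cC_k(s):=\{I\in\cD_k:Z_k(I)>s\}, \]
because $\{Z_k>s\}=\cC_k(s)^*$.

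Next I would pass to distribution functions. Since all quantities are non-negative and $\sup_k(P_{k,\tau}(Z_k))>s$ holds exactly when at least one term exceeds $s$,
\[ \int_0^1\sup_{0\le k\le n}(P_{k,\tau}(Z_k))(t)\,dt=\int_0^\infty\lambda\Big(\bigcup_{k=0}^n\tau(\cC_k(s))^*\Big)\,ds. \]
The decisive observation is that the union over $k$ of the transported level sets is itself the $\tau$-image of a single collection: setting $\cC(s):=\bigcup_{k=0}^n\cC_k(s)\sbe\cD$, one has $\tau(\cC(s))^*=\bigcup_{k=0}^n\tau(\cC_k(s))^*$, so Semenov's condition applies directly to $\cC(s)$ and gives $|\tau(\cC(s))^*|\le\kappa\,|\cC(s)^*|$.

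It remains to control $|\cC(s)^*|$, and this is exactly where the monotonicity hypothesis $Z_0\le Z_1\le\cdots\le Z_n$ enters. Since $Z_k\le Z_n$ for every $k\le n$, we have $\{Z_k>s\}\sbe\{Z_n>s\}$, hence $\cC(s)^*=\bigcup_{k=0}^n\{Z_k>s\}=\{Z_n>s\}$ and $|\cC(s)^*|=\lambda(\{Z_n>s\})$. Combining this with the previous step yields, for every $s>0$,
\[ \lambda\Big(\bigcup_{k=0}^n\tau(\cC_k(s))^*\Big)\le\kappa\,\lambda(\{Z_n>s\}), \]
and integrating in $s$ over $(0,\infty)$, using the layer-cake identity once more on the right, produces $\int_0^1\sup_k(P_{k,\tau}(Z_k))\,dt\le\kappa\int_0^1 Z_n\,dt$, as claimed.

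I expect the only genuinely non-routine step to be the middle one: realizing that the pointwise supremum over the levels $k$ collapses, after passing to level sets, into a single invocation of Semenov's condition for the amalgamated collection $\cC(s)=\bigcup_k\cC_k(s)$, and that the chain $Z_0\le\cdots\le Z_n$ is precisely what forces $\cC(s)^*$ to reduce to the top level set $\{Z_n>s\}$. Everything else — the description of $P_{k,\tau}$ as a level-preserving rearrangement and the two appeals to the distribution-function formula — is routine once this bookkeeping is in place.
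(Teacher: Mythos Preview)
Your argument is correct, but it follows a genuinely different route from the paper's. The paper decomposes $Z_k$ into its nonnegative increments $\Delta_{k'}=Z_{k'}-Z_{k'-1}=\sum_{I\in\cD_{k'}}a_I 1_I$, then uses the pointwise majorization $P_{k,\tau}1_I\le 1_{\tau(Q(I))^*}$ (with $Q(I)$ the full dyadic tree below $I$), which is independent of $k$ and therefore absorbs the supremum; integration followed by Semenov's bound $|\tau(Q(I))^*|\le\kappa|I|$, applied interval by interval, finishes the proof. Your approach instead passes to distribution functions and applies Semenov's condition once per threshold $s$ to the amalgamated collection $\cC(s)=\bigcup_k\cC_k(s)$, with the monotonicity hypothesis entering cleanly as $\cC(s)^*=\{Z_n>s\}$. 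Both arguments are short; yours is arguably more transparent about where the chain $Z_0\le\cdots\le Z_n$ is used, while the paper's yields an explicit pointwise majorant for $\sup_k P_{k,\tau}(Z_k)$ before any integration, which could be of independent use.
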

\medskip
\begin{proof}
Let $\Delta_0:= Z_0$ and 
$\Delta_k := Z_k - Z_{k-1}$ for $k=1,...,n$, and let us write
\[ 
\Delta_k = \sum_{I\in \cD_k} a_I 1_I
\]
with $a_I \ge 0$. Fix $k\in \{0,...,n\}$ and observe that, point wise, 
\[ P_{k,\tau} 1_I  \le 1_{\tau (Q(I))^*}
   \sptext{1}{with}{1}
   Q(I) = \{ K\subseteq I : K \in \cD \} \]
for $I\in \cD_{k'}$ with $k'=0,...,k$ (note that $1_I$ is constant
on the dyadic intervals of length $2^{-k}$ so that we may apply
$P_{k,\tau}$). This implies that
\[     P_{k,\tau}\left (\sum _{k'=0}^k \sum_{I\in \cD_{k'}} a _I 1_I 
       \right )
   \le \sum _{k'=0}^k \sum_{I\in \cD_{k'}} a _I  1_{\tau (Q(I))^*}. \]
Because the expression on the right-hand side is monotone in $k$
we conclude that
\[     \sup_{k=0,...,n} 
       P_{k,\tau} \left ( \sum _{k'=0}^k \sum_{I\in \cD_{k'}} a _I  1_I 
       \right ) 
   \le \sum_{k'=0}^n    \sum_{I\in\cD_{k'}} a _I 1_{\tau (Q(I))^*}. \]
Integration gives 
\[     \int_0^1 \left [ 
       \sup_{k=0,...,n} 
       P_{k,\tau}\left ( \sum _{k'=0}^k \sum_{I\in\cD_{k'}} 
       a _I 1_I \right )(t) \right ] dt
   \le \sum_{k'=0}^n \sum_{I\in \cD_{k'}} a _I |\tau (Q(I))^*|. \]
Our hypothesis gives $|\tau (Q(I))^*| \le \kappa |I|$ so that
\[ 
    \sum_{k'=0}^n \sum_{I\in \cD_{k'}} a _I |\tau (Q(I))^*|
\le \kappa \sum_{k'=0}^n \sum_{I\in \cD_{k'}} a _I |I| 
 =  \kappa \int_0^1  \left [ \sum_{k=0}^n \Delta _k(t) \right ] dt
\]
and we are done because
\[   \int_0^1 \left [ 
         \sup_{k=0,...,n} 
         P_{k,\tau}\left ( \sum _{k'=0}^k \sum_{I\in \cD_{k'}} 
                           a _I 1_I \right ) (t) 
               \right ] dt
   = \int_0^1 \sup_{k=0,...,n} (P_{k,\tau} Z_k)(t) dt \]
and
\[    \int_0^1  \left [ \sum_{k=0}^n \Delta _k(t) \right ] dt
   =  \int_0^1 Z_n(t) dt. \]
\end{proof}
\bigskip

\begin{proofof}{Theorem \ref{theorem:one_sided_extrapolation_new_new}.}
${\rm (i)}\Longrightarrow {\rm (ii)}$
We let $\frac{1}{q}=\frac{1}{r} + \frac{1}{p}$ and
\[ d_k := \sum_{I\in\cD_{k-1}} \alpha_I h_I
   \sptext{1}{so that}{1}
   T_\tau d_k = \sum_{I\in \cD_{k-1}} \alpha_I h_{\tau(I)}. \]
Define 
$X_0:=0$,
$X_k:=d_1+\cdots+d_k$ for $k=1,...,n$, 
$X^*_k := \sup_{l=0,...,k} \| X_l\|_X$ for $k=0,...,n$,
${^*}X_k:=X^*_{k-1} + \sup_{l=1,...,k} \|d_l\|_X$ 
for $k=1,...,n$, 
\[ \gamma_k    := ({^*}X_k + \delta)^\alpha \]
for some $\delta>0$,
\[ \alpha      := 1 - \frac{q}{p}, \]
and 
\[ \beta_k  := P_{k-1,\tau} \gamma_k. \]
By definition we have that 
\[   \frac{T_\tau(d_k)}{\beta_k} 
   = T_\tau \left ( \frac{d_k}{\gamma_k} \right ). \]
>From the monotonicity assumption on the operator $A$ it follows that
\[      \noo A\left ( \sum_{k=1}^n d_k \right ) \rrm_{L^q} 
    \le c \| \beta^*_n\|_{L^r}
        \left \| A \left  (\sum_{k=1}^n \frac{d_k}{\gamma_k} \right ) \right \|_{L^p}
    \le c \| A \|_p  \| \beta^*_n\|_{L^r} 
        \left \| \sum_{k=1}^n \frac{d_k}{\gamma_k} \right \|_{L^p_X}. \]
>From \cite[Lemma A]{Mau} we know that
\equa
      \left \| \sum_{k=1}^n \frac{d_k}{\gamma_k} \right \|_{L^p_X}
&\le& \frac{p}{q} \left ( \ev (^*X_n+\delta)^q \right )^\frac{1}{p} \\
&\le& \frac{p}{q} 3^\frac{q}{p}  \left ( \ev (X_n^*+\delta)^q \right )^\frac{1}{p}.
\tion
Finally, applying 
Proposition \ref{proposition:maximal_inequality_condition_Semenov}
we get
\begin{multline*}
      \| \beta^*_n \|_{L^r}^r
    = \int_0^1 \sup_{k=1,...,n} | (P_{k-1,\tau}(\gamma_k))(t)|^r dt 
    = \int_0^1 \sup_{k=1,...,n}   (P_{k-1,\tau}(|\gamma_k|^r))(t) dt \\
   \le \kappa \int_0^1 | \gamma_n(t) |^r dt 
    =  \kappa \int_0^1 | ^*X_n(t) +\delta |^{\alpha r} dt 
   \le 3^{\alpha r} \kappa \int_0^1 | X_n^*(t) +\delta |^{\alpha r} dt.
\end{multline*}
Combining all estimates, we get
\[     \noo A\left ( \sum_{k=1}^n d_k \right ) \rrm_{L^q} 
   \le c \| A \|_p 3^\alpha \kappa^\frac{1}{r} 
       \left ( \ev | X_n^* + \delta|^{\alpha r} \right )^\frac{1}{r}
       \frac{p}{q} 3^\frac{q}{p} 
       \left ( \ev | X_n^* + \delta|^q \right )^\frac{1}{p}. \]
By $\delta\downarrow 0$ and Doob's maximal inequality this implies 
\[     \noo A\left ( \sum_{k=1}^n d_k \right ) \rrm_{L^q} 
   \le c \| A \|_p \frac{3p}{q-1} \kappa^\frac{1}{r} 
       \| d_1 + \cdots + d_n \|_{L_X^q}. \]
${\rm (ii)}\Longrightarrow {\rm (i)}$
We fix $X=\bK$, $n\in \{1,2,... \}$, and a permutation 
$\tau$ with $|\tau(I)| = |I|$. Let 
$   A \left ( \sum_{k=1}^nd_k\right ) 
 := \left ( \sum_{k=1}^n (T_\tau d_k)^2\right )^\frac{1}{2}$
which is $\tau$-monotone with constant $c=1$.
Clearly, $\| Af \|_{L^2} = \| f\|_{L^2}$. If we have an extrapolation to some
$q\in (1,2)$, then by  the square function inequality the 
usual permutation operator is bounded in  $L^q$ with a constant not 
depending on $n$, so that by Semenov's theorem \cite{Sem} 
condition  {\rm (\ref{eqn:Semenov_condition})} has to be satisfied.
\end{proofof}
 

\section{Extrapolation and the  Carleson condition}
\label{sec:carleson}

In this  section we consider rearrangement operators induced by 
bijections $\tau : \cD \to \cD $ that preserves the Carleson packing 
condition, that is there is an $A\ge 1$ such that
\[      \frac{1}{A} \y \cE \yy 
   \le  \y \tau(\cE) \yy 
   \le A \y \cE \yy  \]
for all non-empty $\cE\subseteq \cD$. In particular, we do not rely anymore 
on the a-priori hypothesis that $ | \tau (I)|  =| I |$.
The corresponding extrapolation results are formulated in
Corollary \ref{corollary:downwards_extrapolation},
Corollary \ref{corollary:downwards_extrapolation_semenov}, and
Theorem \ref{theorem:extraisolp}, where we obtain in
Corollary \ref{corollary:downwards_extrapolation_semenov} an alternative 
proof of Corollary \ref{cor:one_sided_extrapolation_under_UMD}
that works without  $X $ being a $\umd$-space.
To shorten the notation we let 
$\cD_0^N := \bigcup_{k=0}^N \cD_k$ for $N\ge 0$.
Because we use complex interpolation we shall assume that
all Banach spaces are complex.
\bigskip

We start with a technical condition which ensures a one-sided
extrapolation. The condition will be justified by
Examples \ref{example:Semenov} and \ref{example:propertyP} 
below.

 \begin{definition}\label{definition:cqkappa}\rm
Let $X$ be a Banach space,
$\tau: \cD_0^N \to \cD_0^L$ be an injection, 
$\gamma_I>0$ for $I\in \cD_0^N$, $p\in (1,\infty)$, and $\kappa>0$.
We say that condition $C(X,p,\kappa)$ is satisfied,
provided that for all $J_0\in \cD_0^N$ there is a decomposition
\[   \left \{  I\in \cD_0^N : I \subseteq J_0 \right \}
   = \bigcup_i \cK_i, \]
$\cK_i\not = \emptyset$,
such that the following is satisfied:
\begin{enumerate}[(C1)]
\item $\sum_i |\cK_i^*| \le \kappa |J_0|$.
\item For $1=\frac{1}{p}+\frac{1}{q}$ and 
      \[ \beta_i := \sup \left \{ 
         \left \| \sum_{I\in\cK_i} \gamma_I^\frac{1}{q} a_I h_I
         \right \|_{L^p_X}^q:
         \left \| \sum_{I\in\cK_i} a_I h_I \right \|_{L^p_X} = 1 \right \}\]
      one has that
      $ \sum_i \beta_i |\tau(\cK_i)^*|  
                 \le \kappa |J_0|$. 
\item There exists $p_* \in [p,\infty)$ such that
      \[     \left ( \sum_i \left \| \sum_{I\in\cK_i} a_I h_I
                     \right \|_{L^{p_*}_X}^{p_*}\right )^\frac{1}{p_*}
         \le \kappa \left \| \sum_{J_0 \supseteq I \in\cD_0^N} 
             a_I h_I \right \|_{L^{p_*}_X}. \]
\end{enumerate}
\end{definition}

\begin{example}\label{example:Semenov} \rm
We assume that $\tau:\cD\to\cD$ with $|\tau(I)|=|I|$ satisfies 
the Semenov condition (\ref{eqn:Semenov_condition}) with constant 
$\kappa\in [1,\infty)$, restrict $\tau$ to $\tau_N:\cD_0^N\to\cD_0^N$, 
and take $\gamma_I=1$ for all $I\in \cD_0^N$. Let $X$ be arbitrary,
$p\in (1,\infty)$, and $J_0 \in \cD_0^N$.
Because of
\[     \left |\bigcup_{J_0 \supseteq I\in\cD_0^N} \tau_N(I)\right | 
   \le \kappa |J_0| \]
we can take
\[  \cK_1 := \left \{  I\in \cD_0^N : I \subseteq J_0 \right \} \]
and conditions (C1), (C2), and (C3) (for any $p^*$) are satisfied
with constant $\kappa$ uniformly in $N$.
\end{example}

\begin{example}\rm\label{example:propertyP}
Let $ \tau : \cD \to \cD $ be a bijection and assume that there is an 
$A\ge 1$ such that
\[     \frac{1}{A}  \y      \cE   \yy
   \le              \y \tau(\cE ) \yy
   \le           A  \y      \cE   \yy \]
for all non-empty $\cE \sbe \cD$.
Let $X$  be a \umd-space and $\gamma_I := |I|/|\tau(I)|$.
As shown in \cite[Theorem 1]{pfxm1}, the permutation $\sigma=\tau^{-1}$
satisfies the following property P:
There exists an $M>0$ such that for all dyadic intervals $J_0 \in\cD$ there exists a 
decomposition as disjoint union 
\[   \{ I \in \cD : I \sbe J_0 \}  
   = \sigma(\cD)\cap J_0
   = \bigcup_i \s (\cL_i) \cup \bigcup_i \cE_i \]
such  that 
\begin{enumerate}[(1)]
\item $\left \y \bigcup_i \cE_i \right \yy \le M$,
\item $\sup_{K \in \cL_i} \frac{|\s(K)|}{|K|} \le M \frac{ |\s (\cL _i)^*|  
       + |\cE_i^ * |}{|\cL_i^*|}$ for $\cL_i\not = \emptyset$,
\item $\sum _i |\s (\cL _i)^* | \le M |J_0|$.
\end{enumerate}
Now we check the counterparts of (C1), (C2), and (C3) for the 'infinite'
permutation $\tau$.

Condition (C3): As  $X$ is a \umd-space (and therefore super-reflexive)
there is a $p_0\in [2,\infty)$ such that 
for all $p_*\in [p_0,\infty)$ the space $X$ has cotype $p_*$. This cotype and 
the \umd-property imply (C3) (the constant may depend on $p_*$). 

Condition (C1): We write 
\[    \bigcup_i \cE_i
    = \left \{\widetilde{I}_1,\widetilde{I}_2,...\right \} 
   \sptext{1}{and}{1}
      \widetilde{\cL}_j
   := \{ \tau(\widetilde{I}_j)\} \]
so that
\[   \left \{  I\in \cD : I \subseteq J_0 \right \}
   = \bigcup_i \sigma (\cL_i) \cup
     \bigcup_j \sigma (\widetilde{\cL}_j)
   =:  \bigcup_i \cK_i \cup
       \bigcup_j \widetilde{\cK}_j. \]
Now
\[
     \sum_i |\cK_i^*|         + \sum_j |\widetilde{\cK}_j^*| 
   = \sum_i |\sigma(\cL_i)^*| + \sum_j |\widetilde{I}_j| 
  \le M | J_0| +  \y \bigcup_i \cE_i \yy |J_0|
  \le 2 M | J_0|. \]
Condition (C2): let $p\in (1,\infty)$ be arbitrary and recall that 
\[ \beta_i =   \sup \left \{ 
      \left \| \sum_{I\in\cK_i} \gamma_I^\frac{1}{q} a_I h_I
      \right \|_{L_p^X}^q:
      \left \| \sum_{I\in\cK_i} a_I h_I \right \|_{L_p^X} = 1 \right \}, \]
where we assume that the sums over $I$ are finitely supported,
and let
\[ \widetilde{\beta}_j :=   \sup \left \{ 
      \left \| \sum_{I\in\widetilde{\cK}_j} \gamma_I^\frac{1}{q} a_I h_I
      \right \|_{L_p^X}^q:
      \left \| \sum_{I\in\widetilde{\cK}_j} a_I h_I \right \|_{L_p^X} = 1 \right \} 
   = \gamma_{\widetilde{I}_j}. \]
Because $\gamma_I = |I|/|\tau(I)|$, the $\umd$-property of $X$ gives
\[ \beta_i \le \umd_p(X)^q\sup_{I\in\cK_i} \frac{|I|}{|\tau(I)|}. \]
Since 
\[     \sup_{I\in\cK_i} \frac{|I|}{|\tau(I)|}
   \le M \frac{|\cK_i^*|+|\cE_i^*|}{|\tau(\cK_i)^*|} \]
for $\cL_i\not = \emptyset$ we get 
\equa
      \sum_i \beta_i |\tau(\cK_i)^*| 
&\le& \umd_p(X)^q \sum_i \sup_{I\in\cK_i} \frac{|I|}{|\tau(I)|} |\tau(\cK_i)^*| \\
&\le& \umd_p(X)^q \sum_i M \frac{|\cK_i^*|+|\cE_i^*|}{|\tau(\cK_i)^*|} 
      |\tau(\cK_i)^*| \\
& = & M \umd_p(X)^q \sum_i [|\cK_i^*|+|\cE_i^*|] \\
&\le& 2 M^2 \umd_p(X)^q |J_0|.
\tion 
In the same way,
\[    \sum_j \widetilde{\beta}_j |\tau(\widetilde{\cK}_j)^*| 
   =  \sum_j |\widetilde{I}_j| 
  \le M |J_0|. \]
Finally, if we restrict $\tau$ to 
$\tau_N:\cD_0^N\to \cD_0^{L_N}$ with $L_N$ chosen such that
$\tau(\cD_0^N) \subseteq \cD_0^{L_N}$, then (C1), (C2), and (C3) are
satisfied with the same constant uniformly in $N$. 
\end{example}

In the following we use the notation 
\[ L_X^r(\cD_0^N) := L_{X,0}^r(\ftnd_{N+1}), \hspace{.5em}
   H_X^{1,at}(\cD_0^N) := H_X^{1,at}(\ftnd_{N+1}), \]
and $H_X^1(\cD_0^N) := H_X^1(\ftnd_{N+1})$ 
for $N=0,1,...$ to avoid a permanent shift in $N$
because we are working with the sets $\cD_0^N$ rather 
than with the $\sigma$-algebras $\ftnd_N$. 
Now fix Banach spaces $X$ and $Y$ and a bounded linear operator 
$S:X\to Y$,  and define  the  family of operators 
$A_p : L^p_X \left (\cD_0^N \right )
 \to   L^p_Y \left (\cD_0^L \right )$
by
\[    A_p \left ( \sum_{I\in \cD_0^N} a_I h_I \right )  
   := \sum_{I\in\cD_0^N} S a_I \gamma_I^\frac{1}{p} h_{\tau(I)}, \]
where $\gamma_I>0$.
We aim at    extrapolation theorems  for 
this  family of operators and extrapolate - under the condition
$C(X,p,\kappa)$ - from $L^p$ downwards to $H^1$ in a first step:

\begin{theorem}\label{theorem:generalextrapolation}
If  $p\in (1,\infty)$  and if assumption $C(X,p, \kappa)$ holds, then
\[      \| A_1: H^1_X(\cD_0^N)\to H^1_Y(\cD_0^L) \|
   \le  \frac{18 p}{p-1} \kappa^{1+\frac{1}{q_*}} 
        \| A_p: L^p_X(\cD_0^N) \to L^p_Y(\cD_0^L) \| \]
where $1=(1/p_*)+(1/q_*)$ and $p_*$ is taken from the definition
of  $C(X,p, \kappa)$.
\end{theorem}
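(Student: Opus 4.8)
The plan is to extrapolate through the atomic decomposition of the source space, reducing the claim to a uniform bound on the image of a single atom. First I would invoke the right-hand inequality in \eqref{eqn:H1_H1at}: given $f\in H^1_X(\cD_0^N)$ and $\varepsilon>0$, choose an atomic decomposition $f=\sum_k\mu_k a^k$ in $L^1_X(\ftnd_{N+1})$ with $\sum_k|\mu_k|\le 18\|f\|_{H^1_X}+\varepsilon$. Since $A_1$ is linear it then suffices to prove the per-atom estimate
\[ \|A_1 a\|_{H^1_Y}\le\frac{p}{p-1}\,\kappa^{1+\frac1{q_*}}\,\|A_p\| \]
for every atom $a$, and to let $\varepsilon\downarrow 0$. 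The first genuine step is to reduce further to atoms supported on a single dyadic interval. Using that the martingale differences of $a$ are supported on $\{\nu\le k\}$, the Haar coefficient $a_I=\langle a,h_I\rangle/|I|$ of an atom vanishes unless $I$ lies inside one of the maximal dyadic intervals $J_0^{(1)},J_0^{(2)},\dots$ of the stopping region $\{\nu<\infty\}$; hence $a=\sum_m a^{(m)}$ with $a^{(m)}$ supported on $J_0^{(m)}$ and mean zero there. Normalising $a^{(m)}=\mu_m\tilde a^{(m)}$ with $\mu_m=\|a\|_{L^\infty_X}|J_0^{(m)}|$ turns each $\tilde a^{(m)}$ into a single-interval atom ($\|\tilde a^{(m)}\|_{L^\infty_X}\le|J_0^{(m)}|^{-1}$), while $\sum_m\mu_m\le\|a\|_{L^\infty_X}\,\prob(\nu<\infty)\le1$; so it is enough to bound $\|A_1\tilde a\|_{H^1_Y}$ for a single-interval atom $\tilde a$ supported on some $J_0\in\cD_0^N$.

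For such a $\tilde a=\sum_{J_0\supseteq I\in\cD_0^N}a_I h_I$ I would apply the decomposition $\{I\subseteq J_0\}=\bigcup_i\cK_i$ furnished by $C(X,p,\kappa)$ and split $A_1\tilde a=\sum_i g_i$, where $g_i:=\sum_{I\in\cK_i}Sa_I\gamma_I h_{\tau(I)}$. The role of the exponent $\gamma_I^{1/p}$ in $A_p$ is that $g_i=A_p\big(\sum_{I\in\cK_i}\gamma_I^{1/q}a_I h_I\big)$, so by the very definition of $\beta_i$,
\[ \|g_i\|_{L^p_Y}\le\|A_p\|\,\Big\|\sum_{I\in\cK_i}\gamma_I^{1/q}a_Ih_I\Big\|_{L^p_X}\le\|A_p\|\,\beta_i^{1/q}\Big\|\sum_{I\in\cK_i}a_Ih_I\Big\|_{L^p_X}. \]
To pass from this $L^p$ bound to $H^1_Y$ I would note that $g_i$ is supported on $\tau(\cK_i)^*$ and has mean zero on every maximal interval of $\tau(\cK_i)$, so that its martingale maximal function $g_i^*$ is again supported on $\tau(\cK_i)^*$; Doob's inequality together with Hölder on that set then gives
\[ \|g_i\|_{H^1_Y}=\|g_i^*\|_{L^1}\le\|g_i^*\|_{L^p}\,|\tau(\cK_i)^*|^{1/q}\le\frac{p}{p-1}\,\|g_i\|_{L^p_Y}\,|\tau(\cK_i)^*|^{1/q}. \]

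It remains to sum over $i$, and this is where the three conditions are designed to interlock. Writing $\|\sum_{I\in\cK_i}a_Ih_I\|_{L^p_X}\le\|\sum_{I\in\cK_i}a_Ih_I\|_{L^{p_*}_X}\,|\cK_i^*|^{1/p-1/p_*}$ (Hölder on the support, admissible since $p\le p_*$), I would apply a threefold Hölder inequality with exponents $q$, $p_*$ and $c$, where $\tfrac1c:=\tfrac1p-\tfrac1{p_*}$; these are admissible precisely because $\tfrac1q+\tfrac1{p_*}+\tfrac1c=\tfrac1q+\tfrac1p=1$. The three resulting factors are controlled exactly by (C2), which gives $(\sum_i\beta_i|\tau(\cK_i)^*|)^{1/q}\le(\kappa|J_0|)^{1/q}$, by (C3), which gives $(\sum_i\|\sum_{I\in\cK_i}a_Ih_I\|_{L^{p_*}_X}^{p_*})^{1/p_*}\le\kappa\|\tilde a\|_{L^{p_*}_X}$, and by (C1), which gives $(\sum_i|\cK_i^*|)^{1/c}\le(\kappa|J_0|)^{1/c}$. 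Using $\tfrac1q+\tfrac1c=1-\tfrac1{p_*}=\tfrac1{q_*}$ these collapse to $\kappa^{1+1/q_*}|J_0|^{1/q_*}\|\tilde a\|_{L^{p_*}_X}$, and the atom normalisation $\|\tilde a\|_{L^{p_*}_X}\le|J_0|^{1/p_*-1}=|J_0|^{-1/q_*}$ absorbs the leftover $|J_0|^{1/q_*}$. Combining with the $\tfrac{p}{p-1}$ from the previous step yields the per-atom estimate and hence the theorem. I expect the main obstacle to be precisely the bookkeeping that forces (C1)--(C3) into one Hölder inequality with the correct powers of $\kappa$ and $|J_0|$ -- in particular recognising $\tfrac1c=\tfrac1p-\tfrac1{p_*}$ as the missing exponent -- together with the slightly delicate verification that $g_i^*$ remains supported on $\tau(\cK_i)^*$, which is what legitimises the localized Hölder step.
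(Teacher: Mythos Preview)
Your proposal is correct and follows essentially the same route as the paper: reduce to a single-interval atom on $J_0$, split along the $\cK_i$, pass from $H^1_Y$ to $L^p_Y$ via the localized H\"older/Doob step on $\tau(\cK_i)^*$, invoke $\beta_i$ and the H\"older bound $\|a_i\|_{L^p_X}\le |\cK_i^*|^{1/p-1/p_*}\|a_i\|_{L^{p_*}_X}$, and then combine (C1)--(C3). The only cosmetic difference is that you do the final summation with one threefold H\"older (exponents $q,p_*,c$ with $1/c=1/p-1/p_*$), whereas the paper nests two H\"older steps (first $q_*,p_*$, then $r=q/q_*,s$ inside the $q_*$-factor); these are equivalent and yield the same constant $\kappa^{1+1/q_*}$, and the paper likewise remarks that a general atom is a convex combination of single-interval atoms.
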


\begin{proof}
Let $1=\frac{1}{p}+\frac{1}{q}$ and
let $a\in H^{1,at}_X(\cD_0^N)$ be an atom with associated stopping time
$\nu$ (like in Definition \ref{definition:hardy_spaces})
and assume first that $\{ \nu<\infty \} = J_0\in \cD_0^N$.
For $J_0$ we choose the sets $\cK_i$ like in Definition \ref{definition:cqkappa}.
Moreover, we use
\[ D_q a := \sum_{I\in\cD_0^N} \gamma_I^\frac{1}{q} a_I h_I 
   \sptext{1}{and}{1}
   a_i := \sum_{I\in\cK_i} a_I h_I \]
for $a=\sum_{I\in\cD_0^N} a_I h_I$ and
\[ \beta_i :=  \sup \left \{ 
               \left \| \sum_{I\in\cK_i} \gamma_I^\frac{1}{q} a_I h_I
               \right \|_{L^p_X}^q :  
               \left \| \sum_{I\in\cK_i} a_I h_I \right \|_{L^p_X} = 1 \right \} . \]
We get that
\pagebreak
\equa
      \| A_1 a \|_{H^1_Y} 
&\le& \sum_i \| A_1 a_i \|_{H^1_Y} \\
& = & \sum_i \| A_p D_q a_i \|_{H^1_Y} \\
&\le& \sum_i |\tau(\cK_i)^*|^\frac{1}{q}
      \left \| A_p D_q a_i \right \|_{H^p_Y} \\
&\le& \frac{p}{p-1} \sum_i |\tau(\cK_i)^*|^\frac{1}{q}
      \left \| A_p D_q a_i \right \|_{L^p_Y} \\
&\le& \frac{p}{p-1}  \| A_p \| \sum_i |\tau(\cK_i)^*|^\frac{1}{q}
      \left \| D_q a_i \right \|_{L^p_X} \\
&\le&  \frac{p}{p-1} 
      \| A_p \| \sum_i 
      \left [ |\tau(\cK_i)^*| \beta_i \right ]^\frac{1}{q}
      \left \| a_i \right \|_{L^p_X} \\
&\le& \frac{p}{p-1} \| A_p \| \sum_i 
      \left [ |\tau(\cK_i)^*|    \beta_i 
      \right ]^\frac{1}{q}
      |\cK_i^*|^{\frac{1}{p}-\frac{1}{p_*}}
      \left \| a_i \right \|_{L^{p_*}_X} \\
&\le& \frac{p}{p-1} \| A_p \| 
      \left ( \sum_i \left | \left [ |\tau(\cK_i)^*|
              \beta_i \right ]^\frac{1}{q} 
              |\cK_i^*|^{\frac{1}{p}-\frac{1}{p_*}} \right |^{q_*} 
      \right )^\frac{1}{q_*} \\
&   & \hspace*{14em}  
     \left ( \sum_i \left \| a_i \right \|_{L^{p_*}_X}^{p_*} 
     \right )^{\frac{1}{p_*}}
\tion
with $1=\frac{1}{q_*}+\frac{1}{p_*}$. Letting $r:= \frac{q}{q_*}$ and 
$1=\frac{1}{r} + \frac{1}{s}$ we obtain that
\[     \sum_i \left | \left [ |\tau(\cK_i)^*|
              \beta_i \right ]^\frac{1}{q} 
              |\cK_i^*|^{\frac{1}{p}-\frac{1}{p_*}} \right |^{q_*} 
   \le \left ( \sum_i \left [ |\tau(\cK_i)^*|
              \beta_i \right ] \right )^\frac{1}{r}
       \left ( \sum_i |\cK_i^*| \right )^\frac{1}{s} 
   \le \kappa |J_0| \]
(with the obvious modification for $q=q_*$) and
\equa
      \| A_1 a \|_{H^1_Y} 
&\le& \frac{p}{p-1} \kappa^{\frac{1}{q_*}} \| A_p \| 
      |J_0|^\frac{1}{q_*}  \left ( \sum_i \left \| a_i \right \|_{L^{p_*}_X}^{p_*} 
      \right )^{\frac{1}{p_*}} \\
&\le& \frac{p}{p-1} \kappa^{1+\frac{1}{q_*}} \| A_p \| 
      |J_0|^\frac{1}{q_*}  \| a  \|_{L^{p_*}_X} \\
&\le& \frac{p}{p-1} \kappa^{1+\frac{1}{q_*}} \| A_p \| 
      |J_0| \| a  \|_{L^\infty_X} \\
&\le& \frac{p}{p-1}  \kappa^{1+\frac{1}{q_*}} \| A_p \|.
\tion
It is not difficult to check that any atom $a\in H_X^{1,at}(\cD_0^N)$ can be written as
finite convex combination of atoms considered in this proof so far.
Using this and (\ref{eqn:H1_H1at}) we end up with
\[ \| A_1 a \|_{H^1_Y} \le  \frac{p}{p-1}  \kappa^{1+\frac{1}{q_*}} \| A_p \|
                            \| a \|_{H^{1,at}_X}
                        \le \frac{18 p}{p-1}  \kappa^{1+\frac{1}{q_*}} \| A_p \|
                            \| a \|_{H^{1}_X} \]
for all $a\in H_X^1(\cD_0^N)$.
\end{proof}

Now we interpolate between  $H^1$ and  $L^p$:
\begin{lemma}\label{lemma:complexinterpolation}
Let $1<q<p<\infty$ and $\frac{1}{q} = \frac{1-\theta}{1} + \frac{\theta}{p} $.
If $Y$ is a \umd-space, then one has
\begin{multline*}
          \| A_q : L^q_X(\cD_0^N) \to L^q_Y(\cD_0^L) \| \\
    \le c \| A_1 : H^1_X(\cD_0^N) \to H^1_Y(\cD_0^L) \|^{1-\theta} 
          \| A_p : L^p_X(\cD_0^N) \to L^p_Y(\cD_0^L) \|^{\theta}
\end{multline*}
where $c>0$ depends at most on $Y$, $p$, and $q$.
In the case $\gamma_I\equiv 1$ the \umd-property of $Y$ is not needed and $c>0$
does not depend on $Y$.
\end{lemma}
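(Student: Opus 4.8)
The plan is to present $A_q$ as the value at $z=\theta$ of an analytic family of operators and to apply Stein's interpolation theorem to the interpolation couples $(H^1_X(\cD_0^N), L^p_X(\cD_0^N))$ and $(H^1_Y(\cD_0^L), L^p_Y(\cD_0^L))$. On the strip $0\le\Re z\le 1$ I set
\[ A_{(z)}\Big(\sum_{I\in\cD_0^N} a_I h_I\Big) := \sum_{I\in\cD_0^N} S a_I\,\gamma_I^{s(z)}\, h_{\tau(I)}, \qquad s(z):=(1-z)+\frac{z}{p}. \]
Only finitely many values $\gamma_I$ occur and each $\gamma_I^{s(z)}=e^{s(z)\log\gamma_I}$ is entire, so $z\mapsto A_{(z)}$ is analytic and the admissibility required by Stein's theorem is automatic. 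Since $s(0)=1$, $s(1)=1/p$ and $s(\theta)=1/q$, we have $A_{(0)}=A_1$, $A_{(1)}=A_p$ and $A_{(\theta)}=A_q$.

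Next I would control $A_{(z)}$ on the two boundary lines. With $\eta:=1-1/p$ one has $s(iy)=1-iy\eta$ and $s(1+iy)=\frac1p-iy\eta$, so in both cases the imaginary part of $z$ contributes the common unimodular factor $\gamma_I^{-iy\eta}$. Hence $A_{(iy)}=R_{iy}\circ A_1$ and $A_{(1+iy)}=R_{iy}\circ A_p$, where $R_{iy}$ is the Haar multiplier on $\cD_0^L$ sending $h_{\tau(I)}\mapsto\gamma_I^{-iy\eta}h_{\tau(I)}$ and acting as the identity on the remaining Haar functions. Because $|\gamma_I^{-iy\eta}|=1$, the symbol of $R_{iy}$ is unimodular, so the \umd-property of $Y$ bounds $R_{iy}$ on $L^p_Y(\cD_0^L)$ by a fixed multiple of $\umd_p(Y)$, uniformly in $y$; the analogous uniform bound on $H^1_Y(\cD_0^L)$ is furnished by the boundedness of dyadic martingale transforms on the $Y$-valued Hardy space for \umd-spaces. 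Consequently $\sup_y\|A_{(iy)}:H^1_X\to H^1_Y\|\le c(Y)\|A_1\|$ and $\sup_y\|A_{(1+iy)}:L^p_X\to L^p_Y\|\le c(Y)\|A_p\|$.

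Stein's theorem then gives $\|A_{(\theta)}:[H^1_X,L^p_X]_\theta\to[H^1_Y,L^p_Y]_\theta\|\le (c(Y)\|A_1\|)^{1-\theta}(c(Y)\|A_p\|)^\theta$, and it remains to identify the interpolation spaces with $L^q$. On the target side the contractive inclusion $H^1_Y\hookrightarrow L^1_Y$ and the Bochner identity $[L^1_Y,L^p_Y]_\theta=L^q_Y$ yield $[H^1_Y,L^p_Y]_\theta\hookrightarrow L^q_Y$ for every Banach space $Y$; on the source side I invoke $[H^1_X,L^p_X]_\theta=L^q_X$ in the direction $\|f\|_{[H^1_X,L^p_X]_\theta}\le C\|f\|_{L^q_X}$ so that an arbitrary $f\in L^q_X$ may be inserted. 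Chaining the three estimates gives the assertion with $c=c(Y,p,q)$. When $\gamma_I\equiv1$ the symbol of $R_{iy}$ equals $1$, hence $R_{iy}=\Id$, no multiplier estimate and thus no \umd-assumption is used, and the constant loses its dependence on $Y$, matching the final sentence of the lemma. The main obstacle is the uniform-in-$y$ boundedness of the unimodular Haar multiplier $R_{iy}$ on $H^1_Y(\cD_0^L)$, which is precisely the point where the \umd-hypothesis on $Y$ is consumed; a secondary point requiring care is the vector-valued interpolation identity $[H^1_X,L^p_X]_\theta=L^q_X$ for a general Banach space $X$, whose needed direction I would obtain from an explicit analytic family built from the $X$-valued atomic decomposition and scalar reweighting by $\|f(\cdot)\|_X$.
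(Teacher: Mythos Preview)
Your proposal is correct and follows essentially the same route as the paper: the analytic family $A_{(z)}$ with exponent $s(z)=1-z(1-1/p)$ is exactly the paper's $J_z$, the boundary estimates via unimodular Haar multipliers and the \umd-property of $Y$ match (the paper obtains $2\,\umd_p(Y)$ on $\Re z=1$ and cites Maurey for the $H^1_Y$ bound on $\Re z=0$), and both conclude by Stein/complex interpolation together with the identification of $[H^1,L^p]_\theta$ with $L^q$. The only minor difference is that the paper first reduces to finite-dimensional subspaces $E\subseteq X$, $F\subseteq Y$ and then invokes the known vector-valued equivalence $(H^1_E,L^p_E)_\theta=L^q_E$ with constants independent of $E$ (via real interpolation results of Weisz and the passage to complex interpolation in Bergh--L\"ofstr\"om), rather than handling the two inclusions separately as you sketch.
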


\begin{proof}
Because we work with probability spaces consisting of a finite number of 
atoms only, we can replace (for simplicity)  $X$ and $Y$ by finite 
dimensional subspaces $E\subseteq X$ and $F\subseteq Y$ such that
$S(E)\subseteq F$,
where we will see that the constant $c$ can be chosen uniformly for all
subspaces $E$ and $F$.
The family $(A_q)_{q\in [1,p]}$ is embedded into an analytic family of
operators.  Let $V$ denote the vertical strip 
$V = \{ x+it \, : \, x \in (0,1), t \in \bR \}$ and let
\[ J_z(a) := \sum_{I\in\cD_0^N} S a_I \gamma_I^{1-z(1- \frac{1}{p})} h_{\tau(I)}. \]
As $ \frac{1}{q} = \frac{1-\theta}{1} + \frac{\theta}{p} $ we have
\[ J_\theta = A_q. \]
Since
\[    \Re \left (1 - i t \left ( 1 - \frac{1}{p}\right ) \right ) 
    = 1
    \qquad \textrm{and} \qquad 
      \Re \left (1 - (1+i t)\left (1 - \frac{1}{p}\right ) \right ) 
    = \frac{1}{p}, \]
we have
\begin{equation}\label{eqn:umpp}
         \|J_{1+it}(f) \|_{L^p_F(\cD_0^L)} 
    \le  2 \umd_p(Y) \| A_p(f) \|_{L^p_F(\cD_0^L)}
\end{equation}
and
\begin{equation}\label{eqn:umdh1}
        \| J_{it}(f) \|_{H^1_F(\cD_0^L)}         
    \le c \| A_1(f) \|_{H^1_F(\cD_0^L)}
\end{equation}
for some $c>0$ depending on $Y$ only.
The latter estimate ($Y$ is a \umd-space) is folklore and can be 
derived in various ways. For example, one can follow 
\cite[Remarque 2]{Mau}.
Following the proof that the complex interpolation method with parameter $\theta$
yields an exact interpolation functor of exponent $\theta$, for example presented 
in \cite[Theorem 4.1.2]{B-L}, we get that 
\equa
&   & \| J_{\theta}(f) \|_{(H_F^1(\cD_0^L),L^p_F(\cD_0^L))_\theta} \\
&\le& \sup_{t\in\bR} 
       \| J_{it} : H^1_E(\cD_0^N) \to H^1_F(\cD_0^L) \|^{1-\theta} 
       \sup_{t\in\bR} 
       \| J_{1+it} : L^p_E(\cD_0^N) \to L^p_F(\cD_0^L) \|^{\theta} \\
&   & \hspace*{21em}
      \| f \|_{(H_E^1(\cD_0^N),L^p_E(\cD_0^N))_\theta} \\
&\le& c^{1-\theta}  (2 \umd_p(Y))^\theta
      \| A_1 : H^1_E(\cD_0^N) \to H^1_F(\cD_0^L) \|^{1-\theta} \\
&   & \hspace*{9em}
      \| A_p : L^p_E(\cD_0^N) \to L^p_F(\cD_0^L) \|^{\theta} 
      \| f \|_{(H_E^1(\cD_0^N),L^p_E(\cD_0^N))_\theta}
\tion
where $(Z_0,Z_1)_\theta$ denotes the interpolation space obtained by
the complex method as in \cite[p. 88]{B-L}. Using
\begin{equation}\label{eqn:interpolation_hardy}
   ( H^1_E(\cD_0^N), L^p_E(\cD_0^N)_\theta =   L^q_E(\cD_0^N) 
   \sptext{1}{and}{1}
   ( H^1_F(\cD_0^L), L^p_F(\cD_0^L)_\theta =   L^q_F(\cD_0^L)
\end{equation}
with  multiplicative constants not depending on $(N,L,X,Y)$
we arrive at our assertion.
In the case $\gamma_I=1$ we have 
$J_{it} = A_1$ and $J_{1+it} = A_p$ so that the \umd-property
in (\ref{eqn:umpp}) and (\ref{eqn:umdh1}) is not needed.
The equivalences (\ref{eqn:interpolation_hardy}) are folklore,
see \cite[p. 334]{Blasco-Xu}. One can deduce them via
the real interpolation method by exploiting 
$(H^1_Z(\cD_0^M),L_Z^r(\cD_0^M))_{\eta,s} = L^s_Z(\cD_0^M)$ for
$\eta\in (0,1)$, $r,s\in (1,\infty)$ with $(1/s)=1-\eta + (\eta/r)$,
$Z\in \{ E,F\}$, and $M\ge 0$, where the multiplicative constants
in the norm estimates depend on $(\eta,r,s)$ only
 (see \cite{Weisz1} and the references therein),
and the connection between the real and complex interpolation method
presented in the second statement of \cite[Theorem 4.7.2]{B-L}, where 
we use that the proof for the first inclusion works as well with 
$\theta_0=0$, $p_0=1$, and $(\overline{A})_{\theta_0,p_0}$ replaced 
by $A_0$. 
\end{proof}
\medskip

\newpage

\begin{cor}\label{corollary:downwards_extrapolation}
Let $ \tau : \cD \to \cD $ be a bijection such that there is an 
$A\ge 1$ with
\begin{equation}\label{eqn:downwards_extrapolation_assumption}
       \frac{1}{A} \y      \cE  \yy
   \le             \y \tau(\cE) \yy  
   \le A           \y      \cE  \yy
\end{equation}
for all non-empty $\cE \sbe \cD$. Furthermore, let $X$ be a \umd-space, 
$\gamma_I := |I|/|\tau(I)|$, and
$1<q<p<\infty$. Then the boundedness of 
\[ \Id_X \otimes T_{p,\tau}: L^p_{X,0} \to L^p_X \]
implies the boundedness of 
\[ \Id_X \otimes T_{q,\tau}: L^q_{X,0} \to L^q_X. \]
In case of $|\tau(I)|=|I|$ the \umd-property is not needed.
\end{cor}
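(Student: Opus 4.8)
The plan is to assemble three results already at our disposal---Example~\ref{example:propertyP}, Theorem~\ref{theorem:generalextrapolation}, and Lemma~\ref{lemma:complexinterpolation}---so the Corollary is essentially a bookkeeping statement. The one conceptual point I would stress at the outset is the identification of the rearrangement with the abstract family $(A_r)$. Taking $Y:=X$, $S:=\Id_X$, and $\gamma_I:=|I|/|\tau(I)|$, and writing $b_I:=a_I|I|^{1/r}$, one has $\Id_X\otimes T_{r,\tau}\big(\sum_I a_I h_I/|I|^{1/r}\big)=\sum_I b_I\,(|I|/|\tau(I)|)^{1/r}h_{\tau(I)}=A_r\big(\sum_I b_I h_I\big)$, so $A_r$ is exactly $\Id_X\otimes T_{r,\tau}$ read off in the unnormalised Haar basis. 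The decisive feature is that the \emph{single}, $r$-independent weight $\gamma_I=|I|/|\tau(I)|$ reproduces $\Id_X\otimes T_{r,\tau}$ at \emph{every} exponent $r$; this is precisely what makes interpolation across the scale of $L^r$-spaces meaningful here. For each $N$ I would restrict $\tau$ to $\tau_N:\cD_0^N\to\cD_0^{L_N}$, so that $A_r^{(N)}$ is the restriction of $\Id_X\otimes T_{r,\tau}$ to Haar coefficients supported in $\cD_0^N$, and consequently $\|A_p^{(N)}\|\le\|\Id_X\otimes T_{p,\tau}:L^p_{X,0}\to L^p_X\|<\infty$ uniformly in $N$.

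With this in place the assembly is straightforward. Since $X$ is a \umd-space and $\tau$ preserves the Carleson condition~\eqref{eqn:downwards_extrapolation_assumption}, Example~\ref{example:propertyP} supplies condition $C(X,p,\kappa)$ with $\kappa$ and the exponent $p_*$ uniform in $N$. Feeding this into Theorem~\ref{theorem:generalextrapolation} bounds $\|A_1^{(N)}:H^1_X(\cD_0^N)\to H^1_X(\cD_0^{L_N})\|$ by $\frac{18p}{p-1}\kappa^{1+1/q_*}\|A_p^{(N)}\|$, again uniformly in $N$. Choosing $\theta\in(0,1)$ with $\frac1q=\frac{1-\theta}{1}+\frac{\theta}{p}$ and applying Lemma~\ref{lemma:complexinterpolation} with the \umd-space $Y=X$ interpolates between the $H^1$-endpoint and the $L^p$-endpoint to give $\|A_q^{(N)}:L^q_X(\cD_0^N)\to L^q_X(\cD_0^{L_N})\|\le c\,\|A_1^{(N)}\|^{1-\theta}\|A_p^{(N)}\|^{\theta}$. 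Combining the three estimates produces a bound on $\|A_q^{(N)}\|$ depending only on $p,q,X,A$ and not on $N$. Because finitely supported Haar series are dense in $L^q_{X,0}$ and each such series is supported in some $\cD_0^N$, letting $N\to\infty$ yields the boundedness of $\Id_X\otimes T_{q,\tau}:L^q_{X,0}\to L^q_X$.

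For the length-preserving case $|\tau(I)|=|I|$ one has $\gamma_I\equiv1$, so the \umd-hypothesis on $Y$ drops out of Lemma~\ref{lemma:complexinterpolation} by its final clause. It then remains to secure $C(X,p,\kappa)$ for an \emph{arbitrary} Banach space $X$, and here I would invoke Example~\ref{example:Semenov} rather than Example~\ref{example:propertyP}: its trivial one-piece decomposition $\cK_1=\{I\in\cD_0^N:I\sbe J_0\}$ makes (C3) automatic (so no cotype, hence no \umd, is used). To apply it I only need the Semenov condition~\eqref{eqn:Semenov_condition}, which I would derive from the Carleson hypothesis: by the characterisation recorded in the ``Carleson's constant'' paragraph (\cite{pfxm1}) the two-sided bound~\eqref{eqn:downwards_extrapolation_assumption} forces $T_{p,\tau}$ to be bounded on $L^p$ for $p\in(1,2)$, and for a length-preserving $\tau$ Semenov's theorem~\cite{Sem} converts this into~\eqref{eqn:Semenov_condition}. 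With $C(X,p,\kappa)$ thus available for every $X$, the same three-step assembly goes through without any \umd-assumption.

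The genuinely hard analytic content---the atomic $L^p\to H^1$ estimate and the $H^1$--$L^p$ complex interpolation of the analytic family---has already been discharged in Theorem~\ref{theorem:generalextrapolation} and Lemma~\ref{lemma:complexinterpolation}, so I do not expect a serious obstacle in the Corollary itself. The two points that do demand care are bookkeeping in nature: first, verifying that \emph{every} constant produced along the way ($\kappa$, $p_*$, and the interpolation constant $c$) is uniform in $N$, which is exactly what legitimises the passage $N\to\infty$; and second, the observation that a single $r$-independent weight $\gamma_I$ encodes $\Id_X\otimes T_{r,\tau}$ at all exponents simultaneously, since without this the interpolation between the $H^1$- and $L^p$-endpoints would not land on the operator $\Id_X\otimes T_{q,\tau}$ we are after.
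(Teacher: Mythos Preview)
Your proposal is correct and follows the paper's approach essentially verbatim: the same three ingredients---Example~\ref{example:propertyP} (resp.\ Example~\ref{example:Semenov} in the length-preserving case), Theorem~\ref{theorem:generalextrapolation}, and Lemma~\ref{lemma:complexinterpolation}---are assembled in the same order with uniform control in $N$. Your density argument for the passage $N\to\infty$ is in fact slightly simpler than the paper's conditional-expectation step~(b), and your handling of the length-preserving clause makes explicit what the paper leaves implicit there and spells out separately in the proof of Corollary~\ref{corollary:downwards_extrapolation_semenov}.
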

\bigskip
\begin{proof}
(a) For all $N\ge 0$ we choose $L_N\ge 0$ such that 
\[ \tau(\cD_0^N) \subseteq \cD_0^{L_N}. \]
Then we can consider the restrictions $\tau_N:\cD_0^N\to \cD_0^{L_N}$
for $N\ge 0$. According to Example \ref{example:propertyP} the property
$C(X,p,\kappa)$ for some $\kappa>0$ is satisfied uniformly in $N$.
Applying Lemma \ref{lemma:complexinterpolation} and
Theorem \ref{theorem:generalextrapolation} gives that
\equa
&   & \| T_{q,\tau_N} : L^q_X(\cD_0^N) \to L^q_X(\cD_0^{L_N}) \| \\
&\le& c_{(\ref{lemma:complexinterpolation})} 
      \| T_{1,\tau_N} : H^1_X(\cD_0^N) \to H^1_X(\cD_0^{L_N}) \|^{1-\theta} \\
&   & \hspace*{10em}
      \| T_{p,\tau_N} : L^p_X(\cD_0^N) \to L^p_X(\cD_0^{L_N}) \|^{\theta} \\
&\le& c_{(\ref{lemma:complexinterpolation})} 
      \left ( \frac{18 p}{p-1} \kappa^{1+\frac{1}{q_*}}\right )^{1-\theta}
      \| T_{p,\tau_N} : L^p_X(\cD_0^N) \to L^p_X(\cD_0^{L_N}) \| \\
& =:&  c  \| T_{p,\tau_N} : L^p_X(\cD_0^N) \to L^p_X(\cD_0^{L_N}) \| \\
&\le&  c  \| T_{p,\tau} : L^p_{X,0} \to L^p_X \|.
\tion
(b) Now we consider a strictly increasing sequence of integers $B_N\ge 1$ such 
that
\[ \tau(\cD_0^{B_N}) \supseteq \cD_0^N. \] 
For $a=\sum_{I\in\cD} a_I h_I$, where $(a_I)_{I\in\cD}\subseteq X$ 
is finitely supported, we get 
\equa
      \|T_{q,\tau} a\|_{L_X^q}
& = & \sup_N  \| E(T_{q,\tau} a|\ftn_N)\|_{L_X^q} \\
& = & \sup_N  \| E(T_{q,\tau_{B_N}} a_{B_N}|\ftn_N)\|_{L_X^q} \\
&\le& \sup_N  \| T_{q,\tau_{B_N}} a_{B_N} \|_{L_X^q} \\
&\le& \sup_N  \| T_{q,\tau_{B_N}}:L_X^q(\cD_0^{B_N})\to L_X^q(\cD_0^{L_{B_N}})\|
              \| a_{B_N} \|_{L_X^q(\cD_0^{B_N})} \\
&\le& c \| T_{p,\tau} : L^p_{X,0} \to L^p_X \| \|a\|_{L_{X,0}^q}
\tion
where $\tau_{B_N}: \cD_0^{B_N}\to \cD_0^{L_{B_N}}$ is the restriction
of $\tau$ considered in (a) and $a_{B_N}$ the restriction of $a$ to
$\cD_0^{B_N}$.
\end{proof}
\bigskip 

Modifying slightly the first step in the proof of Corollary \ref{corollary:downwards_extrapolation}
we can remove the assumption that $X$ is a \umd-space in 
Corollary \ref{cor:one_sided_extrapolation_under_UMD}:
\medskip

\begin{cor}\label{corollary:downwards_extrapolation_semenov}
Let $X$ be a Banach space and let $ \tau: \cD \to \cD $ be a permutation
such that $|\tau ( I) | = |I|$. Then, for $1<q<p<2$, the boundedness of 
\[ \Id_X \otimes T_\tau  : L^p_{X,0} \to  L^p_X \]
implies the boundedness of  
$\Id_X \otimes T_\tau  : L^q_{X,0} \to  L^q_X$. 
\end{cor}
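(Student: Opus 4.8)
The plan is to re-run the two-step machinery already assembled for Corollary~\ref{corollary:downwards_extrapolation}, but to feed it through Example~\ref{example:Semenov} instead of Example~\ref{example:propertyP}. The whole point is that the hypothesis $|\tau(I)|=|I|$ forces $\gamma_I:=|I|/|\tau(I)|=1$ for every $I$, and this single observation removes \emph{both} places where the \umd-property entered the earlier argument: the verification of $C(X,p,\kappa)$ and the interpolation step. So I expect no genuine new difficulty, only a careful bookkeeping that no constant secretly depends on $N$ or on a \umd-constant.

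First I would produce the Semenov condition. Since $\Id_X\otimes T_\tau:L^p_{X,0}\to L^p_X$ is bounded, restricting to sequences $a_I=c_I x_0$ with $c_I$ scalar and $x_0\in X$ a fixed unit vector shows that the scalar operator $T_\tau:L^p_0\to L^p$ is bounded (this reduction is already noted in the opening of Section~\ref{sec:maurey}). Because $p\in(1,2)$, Semenov's theorem \cite{Sem} then yields that $\tau$ satisfies condition~(\ref{eqn:Semenov_condition}) with some $\kappa\in[1,\infty)$. It is precisely here that the restriction $p<2$ is used.

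Next I would set $Y:=X$, $S:=\Id_X$, and $\gamma_I:=1$, and for each $N\ge 0$ pass to the restriction $\tau_N:\cD_0^N\to\cD_0^N$ (the measure-preserving property lets us take $L_N=N$). By Example~\ref{example:Semenov}, the single decomposition $\cK_1=\{I\in\cD_0^N:I\subseteq J_0\}$ verifies $C(X,p,\kappa)$ for \emph{any} Banach space $X$, uniformly in $N$ and with no \umd-assumption. Theorem~\ref{theorem:generalextrapolation} then bounds $T_{1,\tau_N}$ on $H^1_X(\cD_0^N)\to H^1_X(\cD_0^N)$ in terms of $\|T_{p,\tau_N}:L^p_X(\cD_0^N)\to L^p_X(\cD_0^N)\|$; and since $\gamma_I\equiv 1$, the final clause of Lemma~\ref{lemma:complexinterpolation} supplies the complex-interpolation estimate at exponent $q$ \emph{without} requiring $Y=X$ to be a \umd-space. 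Combining these exactly as in step~(a) of the proof of Corollary~\ref{corollary:downwards_extrapolation} produces a bound for $\|T_{q,\tau_N}:L^q_X(\cD_0^N)\to L^q_X(\cD_0^N)\|$ that is uniform in $N$ and controlled by $\|T_{p,\tau}:L^p_{X,0}\to L^p_X\|$.

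Finally, the passage from this uniform finite-level estimate to the boundedness of $\Id_X\otimes T_\tau$ on all of $L^q_{X,0}\to L^q_X$ is step~(b) of the proof of Corollary~\ref{corollary:downwards_extrapolation}, which applies verbatim: choose $B_N$ with $\tau(\cD_0^{B_N})\supseteq\cD_0^N$, write $\|T_{q,\tau}a\|_{L^q_X}=\sup_N\|\bE(T_{q,\tau}a\,|\,\ftn_N)\|_{L^q_X}$ for finitely supported $a$, and invoke the uniform bound on the restrictions. The only thing that must be checked with care is the uniformity of all constants, and this is guaranteed by $\gamma_I\equiv 1$ together with the fixed combinatorial constant $\kappa$ coming from Semenov's condition.
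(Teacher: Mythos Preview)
Your proposal is correct and follows essentially the same route as the paper: deduce Semenov's condition from the scalar boundedness (this is where $p<2$ enters), invoke Example~\ref{example:Semenov} to get $C(X,p,\kappa)$ uniformly in $N$ with $\gamma_I\equiv 1$, and then re-run the proof of Corollary~\ref{corollary:downwards_extrapolation} using the $\gamma_I\equiv 1$ clause of Lemma~\ref{lemma:complexinterpolation} to avoid the \umd-assumption. The paper additionally notes that one may take $L_N=B_N=N$ since $|\tau(I)|=|I|$ forces $\tau(\cD_0^N)=\cD_0^N$, but this is only a simplification of your final step, not a different argument.
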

\smallskip
\begin{proof}
Our assumption implies $\gamma_I=1$ and that $\tau$ satisfies Semenov's condition
with some $\kappa\in [1,\infty)$. By Example \ref{example:Semenov} the restrictions
$\tau_N:\cD_0^N\to\cD_0^N$ satisfy condition $c(X,p,\kappa)$ for all $p\in (1,\infty)$.
Now we can follow the proof of Corollary \ref{corollary:downwards_extrapolation}
with $L_N=B_N=N$ and $\gamma_I=1$ so that the \umd-property in 
Lemma \ref{lemma:complexinterpolation} is not needed.
\end{proof} 
\bigskip

We close with an extrapolation theorem for rearrangement operators that are 
isomorphisms on $L^p_{X,0}$. For real valued rearrangements, i.e. when 
$X = \bR$, the following theorem is well known. It can be obtained by 
different methods, the most direct route \cite{pfxm2} going  via Pisier's
re-norming in $L^p.$
\bigskip

\begin{theorem} \label{theorem:extraisolp}
Let $\tau:\cD\to\cD$ be a bijection and 
$\gamma_I:= |I|/|\tau(I)|$. Assume that $X$ is a \umd-space.
If there exists a $p\in (1,\infty)$ with $p\not = 2$ such that
\begin{equation}\label{eqn:assumption_extraisolp}
         \|\Id_X \otimes T_{p,\tau}: L^p_{X,0} \to L^p_X \|
         \cdot \|\Id_X \otimes T_{p,\tau^{-1}}: L^p_{X,0} \to L^p_X \|
     <   \infty,
\end{equation}
then for each $q\in (1,\infty)$ one has that 
\begin{equation}\label{eqn:conclusion_extraisolp}
               \|\Id_X \otimes T_{q,\tau}: L^q_{X,0} \to L^q_X \|
         \cdot \|\Id_X \otimes T_{q,\tau^{-1}}: L^q_{X,0} \to L^q_X \|
     <   \infty.
\end{equation}
\end{theorem}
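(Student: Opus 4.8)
The plan is to combine the downward extrapolation of Corollary \ref{corollary:downwards_extrapolation} with a duality argument supplying the upward direction. Since $\tau$ is a bijection it suffices to prove that both $\Id_X \otimes T_{q,\tau}$ and $\Id_X \otimes T_{q,\tau^{-1}}$ are bounded from $L^q_{X,0}$ to $L^q_X$ for every $q\in (1,\infty)$, as the product in \eqref{eqn:conclusion_extraisolp} is then automatically finite. First I would produce the Carleson condition: restricting the operators in \eqref{eqn:assumption_extraisolp} to coefficient sequences valued in a fixed one-dimensional subspace $\bK x_0 \subseteq X$ gives $\|\Id_\bK \otimes T_{p,\tau}\|\cdot\|\Id_\bK \otimes T_{p,\tau^{-1}}\|<\infty$, and since $p\neq 2$ the characterization via the Carleson constant recalled above (\cite[Theorems 2 and 3]{pfxm1}) yields an $A\ge 1$ with $\frac1A\y\cE\yy\le\y\tau(\cE)\yy\le A\y\cE\yy$ for all non-empty $\cE\subseteq\cD$. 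Substituting $\cE\mapsto\tau^{-1}(\cE)$ shows $\tau^{-1}$ satisfies the same two-sided inequality, so both $\tau$ and $\tau^{-1}$ meet assumption \eqref{eqn:downwards_extrapolation_assumption}.

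For the downward half I note that \eqref{eqn:assumption_extraisolp} makes $\Id_X \otimes T_{p,\tau}$ and $\Id_X \otimes T_{p,\tau^{-1}}$ bounded, and that with $\gamma_I=|I|/|\tau(I)|$ (respectively $|J|/|\tau^{-1}(J)|$) these are exactly the operators governed by Corollary \ref{corollary:downwards_extrapolation}. As $X$ is a \umd-space and both $\tau,\tau^{-1}$ satisfy the Carleson condition, that corollary gives boundedness of $\Id_X \otimes T_{q,\tau}$ and $\Id_X \otimes T_{q,\tau^{-1}}$ for every $q\in(1,p]$ (the endpoint $q=p$ being the hypothesis itself).

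For the upward half I would use Haar duality. Writing $1/q+1/q'=1$, the normalized systems $\{h_I/|I|^{1/q}\}$ and $\{h_I/|I|^{1/q'}\}$ are biorthogonal, whence on finite Haar sums $\langle \Id_X \otimes T_{q,\tau}f,\,g\rangle=\langle f,\,\Id_{X^*}\otimes T_{q',\tau^{-1}}g\rangle$; together with the dual-norm formula $\|x^*\|_{X^*}=\sup_{\|x\|\le1}|\langle x,x^*\rangle|$ this shows, for every exponent, that $\Id_X \otimes T_{q,\tau}$ is bounded iff $\Id_{X^*}\otimes T_{q',\tau^{-1}}$ is, with comparable norms (no reflexivity required, and the $\gamma$-factor for $\tau^{-1}$ is the reciprocal $|J|/|\tau^{-1}(J)|=\gamma_{\tau^{-1}(J)}^{-1}$, matching the normalization of Corollary \ref{corollary:downwards_extrapolation}). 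Hence \eqref{eqn:assumption_extraisolp} also makes $\Id_{X^*}\otimes T_{p',\tau^{-1}}$ and $\Id_{X^*}\otimes T_{p',\tau}$ bounded. Since the dual of a \umd-space is a \umd-space, I apply Corollary \ref{corollary:downwards_extrapolation} with $X^*$ in place of $X$ to extrapolate these downwards to all exponents in $(1,p')$, and dualize back: as $q\in(p,\infty)$ corresponds to $q'\in(1,p')$, this yields boundedness of $\Id_X \otimes T_{q,\tau}$ and $\Id_X \otimes T_{q,\tau^{-1}}$ for all $q\in(p,\infty)$. Combined with the downward half, \eqref{eqn:conclusion_extraisolp} holds for every $q\in(1,\infty)$.

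The main obstacle is the bookkeeping in the duality step: verifying the adjoint identity $(\Id_X \otimes T_{q,\tau})^*=\Id_{X^*}\otimes T_{q',\tau^{-1}}$ on the mean-zero subspaces, checking that the induced weight for $\tau^{-1}$ is exactly the one Corollary \ref{corollary:downwards_extrapolation} expects, and confirming that passing to $X^*$ preserves the \umd-property and the Carleson condition. Everything else is a direct invocation of the downward corollary and of the Carleson characterization.
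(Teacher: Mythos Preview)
Your proposal is correct and follows essentially the same route as the paper: deduce the two-sided Carleson condition from the scalar case via \cite{pfxm1}, apply Corollary~\ref{corollary:downwards_extrapolation} to both $\tau$ and $\tau^{-1}$ for the downward range, and obtain the upward range by dualizing to $X^*$ (which is again \umd), extrapolating downward there, and dualizing back. The only cosmetic differences are that the paper splits into the cases $p\in(2,\infty)$ and $p\in(1,2)$ when invoking \cite{pfxm1} and explicitly appeals to reflexivity of $X$, whereas you cite the characterization directly for $p\ne 2$ and note that the Haar duality identity suffices without reflexivity; neither affects the argument.
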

\bigskip

\begin{proof}
(a) First we observe that our assumption implies that 
(\ref{eqn:assumption_extraisolp}) holds for $X=\bC$ and $X=\bR$.
If $p\in (2,\infty)$, then \cite[Theorems 2 and 3]{pfxm1} imply
condition (\ref{eqn:downwards_extrapolation_assumption}). In case
of $p\in (1,2)$ duality implies (\ref{eqn:assumption_extraisolp})
for $X=\bR$ and $p$ replaced by the conjugate index $p'\in (2,\infty)$.
Hence we have (\ref{eqn:downwards_extrapolation_assumption}) as well.
\smallskip

(b) From Corollary \ref{corollary:downwards_extrapolation} and (a) we immediately
get (\ref{eqn:conclusion_extraisolp}) for $q\in (1,p)$.
\smallskip

(c) Let $q\in (p,\infty)$. 
It is easy to see that for a bijection $\sigma:\cD\to\cD$ and $r\in (1,\infty)$
the boundedness of 
\[ \|\Id_X \otimes T_{r,\sigma}: L^r_{X,0} \to L^r_X \| 
   \sptext{1}{and}{1}
   \|\Id_{X'} \otimes T_{r',\sigma^{-1}}: L^{r'}_{X',0} \to L^{r'}_{X'} \| \]
are equivalent to each other where $1=(1/r)+(1/r')$
(note, that $X$ is in particular reflexive because of the \umd-property).
Using this observation our assumption (\ref{eqn:assumption_extraisolp})
holds for $p'$ and $X'$ and the conclusion
for $q'\in (1,p')$ and $X'$. By duality we come back to $q$ and $X$. 
\end{proof}


\bibliographystyle{plain}


\textbf{Addresses}
\parindent0em

Department of Mathematics and Statistics \\
P.O. Box 35 (MaD) \\
FIN-40014 University of Jyv\"askyl\"a \\
Finland 

\medskip
Department of Analysis\\
J. Kepler University\\
A-4040 Linz\\
Austria


\end{document}